\setlist{nolistsep}
\newcommand{\Label}{\label}
\newcommand{\PG}{\textup{PG}}
\newcommand{\Q}{\mathcal{Q}}
\newcommand{\T}{\mathcal{T}}
\renewcommand{\H}{\mathcal{H}}
\renewcommand{\P}{\mathcal{P}}
\newcommand{\K}{\mathcal{K}}
\newcommand{\M}{\mathcal{M}}
\renewcommand{\O}{\mathcal{O}}
\newcommand{\h}{{\mathsf h}}
\newcommand{\sw}{s}
\renewcommand{\sb}{t}
\newtheorem{theorem}{Theorem}[section]
\newtheorem{lemma}[theorem]{Lemma}
\newtheorem{remark}[theorem]{Remark}
\newtheorem{proposition}[theorem]{Proposition}
\newenvironment{proof}{\noindent{\bfseries Proof}\hspace{0.5em}}{ \null \hfill $\square$ \par}
\begin{document}

\title{Characterising elliptic and hyperbolic hyperplanes\\ of the parabolic quadric $\Q(2n,q)$}
\author{Jeroen Schillewaert\thanks{JS is supported by a University of Auckland Faculty Development Research Fund} \and Geertrui Van de Voorde\thanks{This author is supported by the Marsden Fund Council administered by the Royal Society of New Zealand.}
\date{}
}

\maketitle

\begin{abstract}
We provide a natural characterisation for the sets of elliptic and hyperbolic hyperplanes of the parabolic quadric $\Q(2n,q)$ when $q$ is even. This characterisation is based on the number of elements of these sets through points and codimension $2$ spaces and generalises \cite{B-H-J,B-H-J-S}.

\end{abstract}

AMS code: 51E20\\
Keywords: projective geometry, quadrics, hyperplanes


\section{Introduction}

The characterisation of polar spaces by means of combinatorial properties can be traced back to the 1950's, with seminal work done by Segre and Tallini and their schools. This continued throughout the 1970s and 1980s by works from Beutelspacher, Thas, and their collaborators and has enjoyed a revival by means of a variety of strong results in the past two decades, see e.g. \cite{survey} and the references therein.
  
Recently,  \cite{B-H-J,B-H-J-S} provided characterisations of certain hyperplanes of quadrics. Our present work generalises the latter.

A hyperplane of $\PG(2n,q)$ meets a non-singular parabolic quadric $\Q(2n,q)$ either in a non-singular elliptic quadric $\Q^-(2n-1,q)$, a non-singular hyperbolic quadric $\Q^+(2n-1,q)$, or a cone, denoted by $P\Q(2n-2,q)$, with vertex a point $P$ and base a non-singular parabolic quadric $\Q(2n-2,q)$. We call these hyperplanes {\em elliptic}, {\em hyperbolic}, or {\em singular} hyperplanes respectively. In this paper, we provide a characterisation result for hyperbolic and elliptic hyperplanes.

\begin{remark}
Throughout the article we treat the similar elliptic and hyperbolic case simultaneously. To achieve this when making statements and carrying out computations we make the following convention: we will use the $\pm$ and the $\mp$ symbol
where $\pm$ reads as $+$ when we are in the hyperbolic case and $-$ in the elliptic case and vice versa for $\mp$. All statements and their proofs should be read by choosing either the top or bottom row of the symbols $\pm$ and $\mp$ consistently (and as such every Theorem containing the symbol $\pm$ proves two different Theorems, one for the hyperbolic case, and one for the elliptic case.)
\end{remark}

\begin{theorem}\label{th:main} Let $\H^\pm$ be a non-empty set of hyperplanes in $\PG(2n,q)$, $n\geq 2$, $q$ even, $q>2$
, or $\PG(4,2)$, such that
\begin{itemize}
\item[(I)] every point of $\PG(2n,q)$ lies in either $0$, $\frac12q^{2n-1}$ or $\frac12q^{n}(q^{n-1}\pm 1)$ hyperplanes of $\H^\pm$, and
\item[(II)] every codimension $2$ space that is contained in a hyperplane of $\H^\pm$ lies in at least $\frac12q$ hyperplanes of $\H^\pm$.
\end{itemize}

Then the set of points contained in $\frac12q^{n}(q^{n-1}{\pm} 1)$ hyperplanes of $\H^\pm$ forms a non-singular quadric $\Q=\Q(2n,q)$ and
$\H^\pm$ is the set of hyperplanes that meets $\Q$ in a non-singular quadric $\Q^\pm(2n-1,q)$ or $n=2$ and $\H^-$ is  the set of solids of $\PG(4,q)$ disjoint from a hyperoval.
\end{theorem}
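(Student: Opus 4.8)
The plan is to argue by induction on $n$, with the cited results \cite{B-H-J,B-H-J-S} serving as the base case $n=2$, and to feed the output of a preliminary count into an inductive reduction to a residual $\PG(2n-2,q)$. First I would set up the standard counting equations. Let $\X_0,\X_1,\X_2$ be the sets of points lying in $0$, $\tfrac12 q^{2n-1}$, respectively $\tfrac12 q^{n}(q^{n-1}\pm1)$ hyperplanes of $\H^\pm$, put $\Q:=\X_2$ and $k:=|\H^\pm|$. Counting the incident pairs $(P,H)$ with $P\in H\in\H^\pm$ and the incident triples $(P,P',H)$ with $P\neq P'$ both in $H\in\H^\pm$, together with $|\X_0|+|\X_1|+|\X_2|=\tfrac{q^{2n+1}-1}{q-1}$, expresses $|\X_1|$, $|\X_2|$ and $k$ as affine functions of the single parameter $|\X_0|$. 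A further relation comes from counting the flags $(\tau,H)$ with $\tau$ a codimension $2$ space and $\tau\subseteq H\in\H^\pm$, and using (II) to bound the number of such $H$ through a fixed $\tau$ from below by $\tfrac12 q$. I expect this to confine $|\X_0|$ to a short list of values which, after a little more work, reduces to $1$ (whence $|\Q|=|\Q(2n,q)|$) and, only when $n=2$, to $q+2$ (whence $|\Q|=q^{3}(q+1)$). In the first case a second-moment refinement of the same count, exploiting (II), should additionally show that each member of $\H^\pm$ meets $\Q$ in a common cardinality, forcing this cardinality to be $|\Q^{\pm}(2n-1,q)|$, and that $\X_0=\{N\}$ is a single point.

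Next I would fix a point $P\in\Q$ and analyse the lines through $P$ together with the members of $\H^\pm$ through $P$, applying (II) to the codimension $2$ spaces $\langle P,\tau\rangle$. In the main case this should force every line to meet $\Q$ in $0,1,2$ or $q+1$ points, so that $\Q$ is a set of type $(0,1,2,q+1)$; a count of the secant lines through $P$ then shows that the remaining lines through $P$ fill up a hyperplane, the candidate tangent hyperplane at $P$. In the exceptional case the same local analysis should identify $\X_0$ as a hyperoval in a plane $\pi$ and $\Q$ as $\PG(4,q)\setminus\pi$; unwinding the definitions then gives that $\H^-$ is precisely the set of solids disjoint from that hyperoval, and a direct parameter check rules out the hyperbolic sign, yielding the second alternative of the theorem.

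It remains, in the main case, to identify the set $\Q$ of type $(0,1,2,q+1)$ (with the cardinality and hyperplane intersection numbers of $\Q(2n,q)$) with $\Q(2n,q)$, and then to read off $\H^\pm$. Here I would induct: pass to a residual $\PG(2n-2,q)$ --- for instance a codimension $2$ space $\tau$ whose trace on $\Q$ has the parameters of $\Q(2n-2,q)$, equipped with the induced family $\H':=\{\,H\cap\tau : H\in\H^\pm,\ \tau\not\subseteq H\,\}$ --- and verify that $\H'$ satisfies (I) and (II) in $\PG(2n-2,q)$; the crucial point is that (II) for $\H^\pm$, applied to those codimension $2$ spaces of $\PG(2n,q)$ which lie inside $\tau$, is exactly what delivers (II) for $\H'$. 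The induction hypothesis then gives $\tau\cap\Q=\Q(2n-2,q)$; letting $\tau$ vary and patching the resulting sub-quadrics (e.g.\ by recognising the quadric from its tangent hyperplanes, or by checking that the $(0,1,2,q+1)$-traces are governed by a single quadratic form) yields $\Q=\Q(2n,q)$. Finally, since a hyperplane of $\H^\pm$ cannot contain $N$ it is non-tangent, and a short count of how many hyperbolic and elliptic hyperplanes $\H^\pm$ can contain, using $|\H^\pm|=k$ and $\sum_{H\in\H^\pm}|H\cap\Q|=|\Q|\cdot\tfrac12 q^{n}(q^{n-1}\pm1)$, pins $\H^\pm$ down to the set of hyperplanes meeting $\Q$ in $\Q^\pm(2n-1,q)$.

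The hard part will be the inductive reduction, and specifically the verification that \emph{both} hypotheses descend to $\PG(2n-2,q)$: one must show that the hyperplane multiplicities induced in $\tau$ take only the values $0,\tfrac12 q^{2n-3},\tfrac12 q^{n-1}(q^{n-2}\pm1)$, with no spurious value arising from several members of $\H^\pm$ cutting $\tau$ in the same subspace. This requires the counting of the first step to be exact and the low-dimensional exceptional branch to be isolated cleanly, since the hyperoval example shows that the hypotheses are only just strong enough; accordingly I expect the bulk of the work to consist of careful bookkeeping around these descent and separation issues rather than a single conceptual leap.
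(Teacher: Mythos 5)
Your first paragraph is broadly in line with what the paper actually does (its Lemmas \ref{lem:A-size1}--\ref{item:hsize} and Proposition \ref{hyperovalcase} pin down $|\H^\pm|$, the constant number $t$ of special points per member of $\H^\pm$, and the hyperoval branch --- although note that constancy of $t$ and the uniqueness of the $0$-point already follow from (I) alone, no ``second-moment refinement exploiting (II)'' is needed there). The genuine gap is in your inductive reduction. You claim that condition (II) for $\H^\pm$, ``applied to those codimension $2$ spaces of $\PG(2n,q)$ which lie inside $\tau$'', delivers condition (II) for the induced family $\H'$ in $\tau\cong\PG(2n-2,q)$. But the only codimension $2$ space of $\PG(2n,q)$ contained in $\tau$ is $\tau$ itself: the hyperplanes of $\tau$ (the elements of $\H'$) have codimension $3$ in $\PG(2n,q)$, and the codimension $2$ subspaces of $\tau$ --- the objects condition (II) for $\H'$ is about --- have codimension $4$, about which hypothesis (II) says nothing. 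Likewise condition (I) for $\H'$ does not descend: the hypotheses bound the number of hyperplanes of $\H^\pm$ through a point, not the number of \emph{distinct traces} $H\cap\tau$ through a point of $\tau$, and several members of $\H^\pm$ can cut $\tau$ in the same subspace; you flag this multiplicity issue yourself, but it is not bookkeeping --- neither (I) nor (II) gives you any handle on it. Finally, even granting the induction, the ``patching'' of the residual quadrics $\tau\cap\Q=\Q(2n-2,q)$ into a single quadric $\Q(2n,q)$ is itself a substantial characterisation theorem, not a routine verification: you would essentially be reproving the intersection-number characterisation of polar spaces.

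The paper avoids induction altogether. Using (I) alone it shows (Proposition \ref{pr:main}) that the special points form a parabolic quasi-quadric with the unique $0$-point as nucleus, which fixes all hyperplane intersection numbers; then (Lemma \ref{plane-count-new}) it combines the lower bound $\tfrac12 q$ from (II) with a second-moment identity $\sum_i (x_i-|\Q(2n-2,q)|)(x_i-|p\Q^{\pm}(2n-3,q)|)=0$ computed inside a fixed member of $\H^\pm$ to force the codimension $2$ intersection numbers to be exactly those of $\Q(2n,q)$; and it then invokes the De Winter--Schillewaert theorem (Theorem \ref{SDW-JS}), which is precisely the statement that a point set with the hyperplane and codimension $2$ intersection numbers of a non-singular polar space is that polar space --- i.e.\ exactly the step your plan tries to rebuild by induction and patching. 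If you want to salvage your route, you would either need to prove descent statements for (I) and (II) that the hypotheses do not support, or replace the whole third stage by an appeal to such an intersection-number characterisation, at which point the induction becomes superfluous.
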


\begin{remark}
Theorem \ref{th:main} simultaneously generalises the main results of \cite{B-H-J,B-H-J-S} from dimension $4$ to all even dimensions. Furthermore, in \cite{B-H-J}, Condition (II) states that every every plane in $\PG(4,q)$ has to lie on $0,\frac12q,$ or $q$ solids, so our Condition (II) is slightly less stringent.
\end{remark}

\begin{remark}[Independence of the axioms]\label{axioms}
For an easy counterexample satisfying (II) but not (I) let $\mathcal{H}$ be the set of hyperplanes not containing a given codimension $2$ space.
In Lemma \ref{sanitycheck} we will prove that a {\em quasi-quadric} as defined below satisfies (I). Hence the set of hyperplanes meeting a quasi-quadric which is not a quadric in $|\Q^\pm(2n-1,q)|$ points satisfies (I) but not (II).  
\end{remark}

A {\em parabolic quasi-quadric $\K$} in $\PG(2n,q)$, $q$ even, with nucleus $N$ (as defined in \cite[Example 3.1]{QQ})  is a set of $\frac{q^{2n}-1}{q-1}$ points 
such that each line through $N$ contains a unique point of $\K$ and every hyperplane not through $N$ meets $\K$ in either $\frac{(q^n+1)(q^{n-1}-1)}{q-1}$ or $\frac{(q^n-1)(q^{n-1}+1)}{q-1}$ points.

\begin{remark}
It is clear that every non-singular parabolic quadric is a parabolic quasi-quadric but it was shown in \cite[Theorem 10]{QQ} that not all parabolic quasi-quadrics with nucleus $N$ are isomorphic to $\Q(2n,q)$ if $n\geq 2$, $q>2$. 
We are currently investigating the construction of quasi-quadrics and quasi-Hermitian varieties and the existence of parabolic quasi-quadrics without nucleus in $\PG(4,2)$.
\end{remark}

As a first step towards the proof of Theorem~\ref{th:main}, we will show the following. 

\begin{proposition}\label{pr:main} Let $\H^\pm$ be a non-empty set of hyperplanes in $\PG(2n,q)$, $n\geq 2$, $q$ even, $q>2$, or $\PG(4,2)$, such that
 \begin{itemize}
\item[(I)] every point of $\PG(2n,q)$ lies in either $0$, $\frac12q^{2n-1}$ or $\frac12q^{n}(q^{n-1}\pm 1)$ hyperplanes of $\H^\pm$,
\end{itemize}

then the set of points contained in $\frac12q^{n}(q^{n-1}\pm 1)$ hyperplanes of $\H^\pm$ forms a parabolic quasi-quadric $\K$ and
$\H^\pm$ is the set of hyperplanes that meets $\K$ in $|\Q^\pm(2n-1,q)|$ points or or $n=2$ and $\H^-$ is  the set of solids of $\PG(4,q)$ disjoint from a hyperoval.
\end{proposition}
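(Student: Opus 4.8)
Write $h=|\H^\pm|$; let $b=\tfrac12q^{2n-1}$ and $c=\tfrac12q^{n}(q^{n-1}\pm1)$ be the two non-zero values occurring in (I), so that $c-b=\pm\tfrac12q^{n}$; let $m(P)$ be the number of hyperplanes of $\H^\pm$ through a point $P$; and for $i=0,1,2$ put $\P_i=\{P:m(P)=0,\ b,\ c\text{ respectively}\}$, $p_i=|\P_i|$, and $\theta_k=\frac{q^{k+1}-1}{q-1}$. The plan is to translate (I) into counting identities, solve for $h$, and then check the axioms of a parabolic quasi-quadric for $\P_2$. I would first record the double counts of incidences with $\H^\pm$. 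For a line $\ell$, with $s(\ell)$ the number of members of $\H^\pm$ through $\ell$, one has $\sum_{P\in\ell}m(P)=qs(\ell)+h$, so $q\mid h$; counting incident point--hyperplane pairs gives $bp_1+cp_2=h\theta_{2n-1}$; and trivially $p_0+p_1+p_2=\theta_{2n}$. The crucial identity comes from a single $\Sigma\in\H^\pm$: all points of $\Sigma$ have multiplicity $\ge1$, hence lie in $\P_1\cup\P_2$, and $\sum_{P\in\Sigma}m(P)=\theta_{2n-1}+(h-1)\theta_{2n-2}$, so $|\Sigma\cap\P_2|$ equals a constant $\gamma^{\ast}=\frac{\theta_{2n-1}(1-b)+(h-1)\theta_{2n-2}}{c-b}$ independent of $\Sigma$; counting pairs (point of $\P_2$, member of $\H^\pm$) then yields $c\,p_2=h\,\gamma^{\ast}$. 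Together these express $p_0,p_1,p_2$ as explicit functions of the single unknown $h$.

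To determine $h$ I would use non-negativity and integrality of all the quantities involved (the $p_i$, the divisibility $q\mid h$, and the analogous within-hyperplane and within-codimension-$2$ counts), supplemented by finer double counts: over a codimension $2$ space $\Lambda$ one gets $\sum_{P\in\Lambda}m(P)=h\theta_{2n-3}+s(\Lambda)q^{2n-2}$, and inside a $\Sigma\in\H^\pm$, weighting each hyperplane $\tau$ of $\Sigma$ by $s(\tau)-1\ge0$ produces a weighted set with just two hyperplane-sums $b-1$ and $c-1$, whose standard equations give further constraints. The expected outcome is that only two values of $h$ survive. For $h=\tfrac12q^{n}(q^{n}\pm1)$ the identities of the previous paragraph then force $p_0=1$, $p_1=q^{2n}-1$, $p_2=\theta_{2n-1}$, and a short computation gives $\gamma^{\ast}=|\Q^\pm(2n-1,q)|$.

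Now assume $h=\tfrac12q^{n}(q^{n}\pm1)$, let $N$ be the unique point of $\P_0$, and put $\K=\P_2$. Since no member of $\H^\pm$ passes through $N$, every line $\ell$ through $N$ has $s(\ell)=0$, so $\sum_{P\in\ell\setminus\{N\}}m(P)=h$; because $n\ge2$, the admissible values of $m$ leave no possibility other than exactly one of these $q$ points lying in $\K$ and the other $q-1$ lying in $\P_1$. Hence every line through $N$ meets $\K$ in exactly one point and $|\K|=\theta_{2n-1}$ as required. For a hyperplane $\Sigma$ with $N\notin\Sigma$ we have $\Sigma\cap\P_0=\emptyset$, and the same double count gives $|\Sigma\cap\K|=\gamma^{\ast}$ when $\Sigma\in\H^\pm$ and $|\Sigma\cap\K|=\frac{h\theta_{2n-2}-b\theta_{2n-1}}{c-b}=\gamma^{\ast}\mp 2q^{n-1}$ when $\Sigma\notin\H^\pm$; as $|\Q^+(2n-1,q)|$ and $|\Q^-(2n-1,q)|$ differ by $2q^{n-1}$ and $\gamma^{\ast}=|\Q^\pm(2n-1,q)|$, these two values are $|\Q^\pm(2n-1,q)|$ and $|\Q^\mp(2n-1,q)|$, so $\K$ is a parabolic quasi-quadric with nucleus $N$. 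Finally a hyperplane through $N$ meets $\K$ in $\theta_{2n-2}\ne|\Q^\pm(2n-1,q)|$ points, so $\H^\pm$ is exactly the set of hyperplanes meeting $\K$ in $|\Q^\pm(2n-1,q)|$ points.

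The main obstacle is disposing of the second surviving value of $h$. For $n\ge3$, and for $n=2$ in the hyperbolic case, I expect it to be excluded by the integrality and positivity constraints above; in the elliptic case with $n=2$ it does occur, forcing $h=\tfrac12q^{3}(q-1)$, $p_0=q+2$ and $\K$ equal to the complement of a plane $\pi_0$ of $\PG(4,q)$, and one then has to show that $\P_0$ is a hyperoval of $\pi_0$ and that $\H^-$ is precisely the set of solids disjoint from it. The exceptional case $\PG(4,2)$, including deciding which value of $h$ is realised there, will probably need separate treatment.
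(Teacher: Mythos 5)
Your framework and the elementary counts are sound and essentially coincide with the paper's first steps (your $\gamma^{\ast}$ is the paper's constant $t$, and your final passage from ``unique red point plus known $h$'' to the quasi-quadric axioms, including the line-through-$N$ argument, is carried out correctly and in the same way as the paper). But the proposal leaves unproved exactly the two steps where the real work lies. First, the determination of $h$. Your available constraints are $q\mid h$, integrality of $\gamma^{\ast}$, and non-negativity/integrality of the $p_i$; you then say that ``finer double counts'' should leave only two surviving values of $h$, which is an expectation, not an argument. In the paper this step is Lemmas \ref{lem:A-size1} and \ref{item:hsize}: the plain incidence count already gives $\tfrac12 q^{n}\mid h$ (much stronger than $q\mid h$, using $\gcd(\tfrac12 q^n,\frac{q^{2n}-1}{q-1})=1$ for $q$ even), and the decisive input is the global second-moment count of triples $(P,\Pi,\Sigma)$ with $\Pi\neq\Sigma$ in $\H^\pm$, whose integrality consequence is $\h^\pm(\pm\h^\pm-1)\equiv 0\pmod{q^{n-1}}$ with $\h^\pm=h/(\tfrac12q^n)$; since $q$ is even the two factors are coprime, so $\h^\pm\equiv 0$ or $\pm1\pmod{q^{n-1}}$, and only then do the bounds $0\le \gamma^{\ast}\le \frac{q^{2n}-1}{q-1}$ cut the admissible interval down to $\h^+\in\{q^n,q^n+1\}$ and $\h^-=q^n-1$ (or $\{q^2-q,q^2-1\}$ when $n=2$), after which $\h^+=q^n$ dies because it would force $p_2=q^{2n-1}/(q^{n-1}+1)$. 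Your proposed substitutes are not obviously adequate: the codimension-$2$ identity involves the unknown $s(\Lambda)$ about which hypothesis (I) says nothing, and without a congruence modulo $q^{n-1}$ the interval contains many multiples of $q$ compatible with integral $\gamma^{\ast}$ and $p_i$.

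Second, the exceptional case $n=2$, $h=\tfrac12q^{3}(q-1)$. You assert that $\K$ is the complement of a plane and note that one ``then has to show'' that $\P_0$ is a hyperoval of that plane and that $\H^-$ is the set of solids disjoint from it; but this is precisely the content of the paper's Proposition \ref{hyperovalcase} and it is not a routine verification. One has to analyse lines through red points (each line has at most two red points, a line with two red points carries $q-1$ white points, a tangent line at a red point carries $q$ black points), then use a three-red-points argument to force all $q+2$ red points into a single plane, and finally compare $|\H^-|=\tfrac12q^2(q^2-q)$ with the number of solids disjoint from a hyperoval to identify $\H^-$. In particular the coplanarity of the red points (equivalently, your claim that $\K$ is the complement of a plane) is exactly what must be proved, not assumed. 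As it stands, the proposal establishes only the final and easiest portion of the proposition; the value of $h$ and the hyperoval case require the arguments above (or genuine replacements for them) before the statement is proved.
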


As indicated in Remark \ref{axioms}, the following lemma, together with the fact that not all quasi-quadrics are quadrics, shows that one needs to impose an extra condition on the hyperplanes in Proposition~\ref{pr:main} to be able to characterise them as those meeting a quadric.

The proof of this lemma is a straightforward generalisation of \cite[Lemma 1.7]{B-H-J-S}.

\begin{lemma} \label{sanitycheck} Let $\K$ be a parabolic quasi-quadric in $\PG(2n,q)$ with nucleus $N$ and let $\mathcal{H}^\pm$ be the set of hyperplanes meeting $\K$ in $\frac{(q^{n}\mp 1)(q^{n-1}\pm1)}{q-1}$ points. Then a point of $\PG(2n,q)$ lies on  $0$, $\frac12q^{n}(q^{n-1}\pm 1)$ or $\frac12q^{2n-1}$ hyperplanes of $\H^\pm$, i.e., $\H^\pm$ meets condition (I).
\end{lemma}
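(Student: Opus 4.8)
The plan is to prove the statement by double counting, handling the three possible values in turn; this is essentially the argument of \cite[Lemma 1.7]{B-H-J-S} carried out for general $n$. Write $h^\pm=\frac{(q^n\mp1)(q^{n-1}\pm1)}{q-1}$, so that $\H^\pm$ consists of the hyperplanes meeting $\K$ in exactly $h^\pm$ points and the complementary family $\H^\mp$ of those meeting $\K$ in $h^\mp$ points.

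First I would dispose of the nucleus. One checks that $N\notin\K$ and that every hyperplane $\pi$ through $N$ meets $\K$ in exactly $\frac{q^{2n-1}-1}{q-1}$ points: the lines through $N$ lying in $\pi$ each carry a unique point of $\K$, and these account for all points of $\K\cap\pi$. Since $\frac{q^{2n-1}-1}{q-1}$ differs from both $h^+$ and $h^-$ by $q^{n-1}\neq 0$, no hyperplane on $N$ lies in $\H^\pm$, so $N$ lies on $0$ hyperplanes of $\H^\pm$, which is the first of the three numbers in (I).

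Next, fix a point $P\neq N$ and let $a$ be the number of hyperplanes of $\H^\pm$ through $P$. By the definition of a parabolic quasi-quadric every hyperplane through $P$ but not through $N$ meets $\K$ in $h^+$ or in $h^-$ points, hence lies in $\H^\pm$ or in $\H^\mp$; as there are $q^{2n-1}$ hyperplanes through $P$ avoiding $N$, exactly $q^{2n-1}-a$ of them lie in $\H^\mp$, so summing $|\K\cap\pi|$ over all $q^{2n-1}$ of them equals $h^\pm a+h^\mp(q^{2n-1}-a)$, an explicit linear function of $a$ (using $h^\pm-h^\mp=\pm 2q^{n-1}$). On the other hand this sum counts flags $(\pi,R)$ with $R\in\K\cap\pi$, $P\in\pi$ and $N\notin\pi$; for fixed $R\in\K$ the number of admissible $\pi$ is $q^{2n-1}$ if $R=P$, is $0$ if the line $PR$ passes through $N$ (a hyperplane on $P$ and $R$ then contains $N$), and is $q^{2n-2}$ otherwise (subtract the hyperplanes through the plane $\langle N,P,R\rangle$ from those through the line $PR$). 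Splitting according to whether $P\in\K$: if $P\in\K$ the line $NP$ meets $\K$ only in $P$ and the flag count is $q^{2n-1}+(|\K|-1)q^{2n-2}$; if $P\notin\K$ the term $R=P$ is absent, the unique point $P'\neq P$ of $\K$ on the line $NP$ contributes $0$, and the remaining $|\K|-1$ points of $\K$ each contribute $q^{2n-2}$, giving flag count $(|\K|-1)q^{2n-2}$. Plugging in $|\K|=\frac{q^{2n}-1}{q-1}$, equating the two expressions and solving for $a$ yields $a=\tfrac12 q^n(q^{n-1}\pm1)$ when $P\in\K$ and $a=\tfrac12 q^{2n-1}$ when $P\notin\K$, $P\neq N$; together with the nucleus this is exactly (I).

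I do not anticipate a real obstacle: everything reduces to standard incidence counts between points, lines, planes and hyperplanes. The two things to be careful about are the dichotomy for the line $NP$ (tangent-type, meeting $\K$ only in $P$, versus secant-type, meeting $\K$ in a point other than $P$), which drives the case split in the flag count, and propagating the $\pm/\mp$ convention through the linear equations so the answer reads $\tfrac12 q^n(q^{n-1}+1)$ in the hyperbolic reading and $\tfrac12 q^n(q^{n-1}-1)$ in the elliptic one.
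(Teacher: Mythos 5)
Your proof is correct and follows essentially the same route as the paper: a two-way count of incidences between points of $\K$ and hyperplanes through a fixed point $P$ avoiding $N$, split according to whether $P\in\K$, with the line $NP$ dichotomy driving the case analysis; your flag-count equation is exactly the paper's after subtracting the $R=P$ term. The only (harmless) difference is that you additionally make explicit that $N\notin\K$ and that hyperplanes through $N$ meet $\K$ in $\frac{q^{2n-1}-1}{q-1}$ points, hence lie in neither $\H^+$ nor $\H^-$, a point the paper leaves implicit.
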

\begin{proof} Fix a point $X\in \K$ and let $H^\pm$ be the number of elements of $\H^\pm$ containing $X$. Note that by definition, $H^+ + H^-=q^{2n-1}$. We count in two ways the pairs $(P,\Pi)$ where $P\in \K$, $P\neq X$, and where $\Pi$ is a hyperplane containing $P,X$ but not $N$.  For the left hand side, we first count the hyperplanes through $X$ but not $N$, and then consider the points in those hyperplanes.
 For the right hand side we first look at the points $P\in \K$, $P\neq X$. Note that from the definition of the quasi-quadric the line $PX$ does not contain $N$, so there are $q^{2n-2}$ hyperplanes
through $PX$ which do not contain $N$. We find
$$H^\pm\left(\frac{(q^{n}\mp1)(q^{n-1}\pm 1)}{q-1}-1\right)+\left(q^{2n-1}-H^\pm\right)\left(\frac{(q^{n}\pm 1)(q^{n-1}\mp1)}{q-1}-1\right)=\left(\frac{q^{2n}-1}{q-1}-1\right)q^{2n-2},$$
which implies that $H^\pm=\frac12q^{n}(q^{n-1}\pm 1)$.
We use the same argument for a point $Y$ not belonging to $\K$ and different from $N$. Let $T^\pm$ be the number of elements of $H^\pm$ containing $Y$. Note that the line $NY$ has one point $Z$ of $\K$ so there are no hyperplanes in $H^\pm$ containing $Y$ and $Z$.
$$T^\pm\left(\frac{(q^{n}\mp1)(q^{n-1}\pm 1)}{q-1}\right)+\left(q^{2n-1}-T^\pm\right)\left(\frac{(q^{n}\pm 1)(q^{n-1}\mp1)}{q-1}\right)=\left(\frac{q^{2n}-1}{q-1}-1\right)q^{2n-2}.$$

Hence $T^\pm=\frac12q^{2n-1}$.
\end{proof}

To find the desired characterisation, we add condition (II) as hypothesis which is the direct generalisation of the restriction used in \cite{B-H-J-S} and which is a less stringent condition than the one in \cite{B-H-J}. As opposed to \cite{B-H-J} we are also able to include the case $\PG(4,2)$, see \cite[Lemma 1.6]{B-H-J-S}.

\begin{lemma}\label{axiom-verification} The set $\H^\pm$ of hyperplanes meeting a non-singular parabolic quadric $\Q(2n,q)$, $q$ even, in $|\Q^\pm(2n-1,q)|$ points satisfies Conditions (I)-(II). Furthermore, the set $\H^-$ of solids disjoint from a hyperoval in a plane in $\PG(4,q)$, $q$ even, satisfies Conditions (I)-(II).
\end{lemma}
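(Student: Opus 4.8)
The plan is to verify the two numerical conditions directly by standard counting in the quadric. For Condition (I), observe that a non-singular parabolic quadric $\Q(2n,q)$ with $q$ even has a nucleus $N$, and $\Q\cup\{N\}$ minus $N$ (more precisely, the pointset obtained by replacing $N$'s role) realises $\Q(2n,q)$ as a parabolic quasi-quadric in the sense defined above: every line through $N$ meets $\Q$ in a unique point, and a hyperplane not through $N$ meets $\Q$ in an elliptic or hyperbolic quadric of the appropriate size. Hence Lemma~\ref{sanitycheck} applies verbatim and gives Condition (I): every point lies on $0$, $\frac12q^{n}(q^{n-1}\pm1)$, or $\frac12q^{2n-1}$ hyperplanes of $\H^\pm$. (Alternatively one can count directly: fixing a point $X$ and using the known intersection numbers of $\Q(2n,q)$ with hyperplanes through $X$ according to whether $X\in\Q$ or not, and whether the hyperplane is tangent or not, yields the same three values.)

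For Condition (II), I would take a codimension~$2$ space $\Sigma$ contained in some hyperplane $H\in\H^\pm$, so $H\cap\Q\cong\Q^\pm(2n-1,q)$. The $q+1$ hyperplanes through $\Sigma$ partition the points of $\Q$ outside $\Sigma$, and their intersections with $\Q$ are governed by the type of $\Sigma\cap\Q$. Since $H\cap\Q$ is non-singular of dimension $2n-1$, $\Sigma\cap\Q$ is a quadric in $\PG(2n-2,q)$ of one of a few possible types (non-singular parabolic, or a cone over such, depending on whether $\Sigma$ is tangent to $\Q$ and on the position of the radical). In each case the standard formula for how the $q+1$ hyperplanes through a codimension~$2$ space meet a quadric — determined by the quadratic form induced on the quotient space, which is a binary form over $\Fq$ — tells us exactly how many of these $q+1$ hyperplanes are elliptic and how many hyperbolic. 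One checks that in every admissible configuration the number of $\Q^\pm(2n-1,q)$-hyperplanes through $\Sigma$ is either $0$, or $\frac12 q$, or $\frac12 q+1$, or $q$, or $q+1$; in all the non-zero cases this is at least $\frac12 q$, which is Condition (II). The worst (smallest non-zero) case, giving exactly $\frac12 q$, arises when the quotient form is parabolic, i.e.\ $\Sigma\cap\Q$ is a cone with a non-singular parabolic base — this is the case that forces the constant $\frac12 q$ and shows the bound is tight.

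For the final assertion, the solids of $\PG(4,q)$ disjoint from a hyperoval $\O$ in a plane $\pi$: here one counts directly. A point $P$ of $\PG(4,q)$ lies in $0$ solids disjoint from $\O$ iff $P\in\pi$ (equivalently $P$ lies on a secant or external line extended — but every solid through a point of $\pi$ meets $\pi$ in a line, hence meets $\O$... one must be slightly careful, but the count is elementary); for $P\notin\pi$ one computes the number of solids through $P$ missing $\O$ by inclusion–exclusion over the $q^2+q+1$ lines $\langle P, R\rangle$, $R\in\pi$, and matches it against $\frac12q^{2}(q-1)=\frac12q^{n}(q^{n-1}-1)$ for $n=2$, so Condition (I) holds with the two non-zero values $\frac12q^{2n-1}$ and $\frac12q^{n}(q^{n-1}-1)$ (the hyperbolic value does not occur, consistent with $\H^-$). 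For Condition (II) one takes a plane $\rho$ lying in a solid disjoint from $\O$; the $q+1$ solids through $\rho$ meet $\pi$ in the $q+1$ planes through the line $\rho\cap\pi$ (or $\rho$ could meet $\pi$ in a point), and counting how many of these avoid $\O$ uses only that $\rho\cap\pi$ is external to $\O$ and that an external line of a hyperoval lies in at most... — again elementary — giving at least $\frac12 q$ such solids.

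The main obstacle I expect is the bookkeeping in Condition (II) for the quadric case: one has to enumerate the possible types of $\Sigma\cap\Q$ (depending on whether $\Sigma$ is secant, tangent, or contained in the tangent hyperplane structure, and on the parity/type of the induced binary form), and for each extract the split of the $q+1$ hyperplanes into elliptic/hyperbolic/singular, making essential use of $q$ even (so that a non-singular parabolic quadric has a nucleus and the relevant binary forms behave as claimed). Once the case list is set up, each entry is a one-line computation; the risk is only in missing a case. I would organise this as a short table of the induced quadratic form on the two-dimensional quotient and read off the $\frac12 q$ lower bound uniformly, and separately handle the isolated small case $\PG(4,2)$ by the same argument or by direct inspection, as in \cite[Lemma~1.6]{B-H-J-S}.
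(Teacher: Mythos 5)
Your overall route is the same as the paper's: Condition (I) is obtained from Lemma \ref{sanitycheck}, since $\Q(2n,q)$, $q$ even, is a parabolic quasi-quadric with nucleus $N$; Condition (II) is checked by classifying the intersection of a codimension $2$ space $\Sigma$ (lying in an element of $\H^\pm$) with $\Q$ and counting inside the pencil of $q+1$ hyperplanes through $\Sigma$; and the hyperoval case is handled by direct counting of solids through points and through planes. Two details in your sketch are wrong, though neither changes the strategy. First, in the quadric case the only sections to consider are $\Sigma\cap\Q\cong\Q(2n-2,q)$ and $\Sigma\cap\Q\cong p\Q^{\pm}(2n-3,q)$ (the hyperplane sections of $\Q^{\pm}(2n-1,q)$), and since $N\notin\Sigma$ the pencil contains exactly one tangent hyperplane, namely $\langle N,\Sigma\rangle$; hence the only values occurring are $\tfrac12q$ (for the non-singular parabolic section) and $q$ (for the cone sections). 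Your values $\tfrac12q+1$ and $q+1$ do not occur for $q$ even, and your identification of the extremal $\tfrac12q$ case as ``a cone with a non-singular parabolic base'' is the wrong way round: the cones $p\Q^{\pm}(2n-3,q)$ lie on $q$ elements of $\H^\pm$, while $\tfrac12q$ comes from $\Sigma\cap\Q=\Q(2n-2,q)$. This over-inclusiveness does not hurt the lower bound $\tfrac12 q$, but the case list as stated is inaccurate.

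Second, in the hyperoval case your claim that a point lies on $0$ solids of $\H^-$ if and only if it lies in $\pi$ is false: a solid meeting $\pi$ exactly in an external line of $\O$ is disjoint from $\O$ yet contains the $q+1$ points of $\pi$ on that line. The correct counts (as in the paper) are: points of $\O$ lie on $0$ solids; points of $\pi\setminus\O$ lie on $\tfrac q2\cdot q^2=\tfrac12q^3$ solids (the $\tfrac q2$ external lines through such a point, each on $q^2$ solids not containing $\pi$); and points off $\pi$ lie on $\tfrac12(q^3-q^2)$ solids. Note that the value $\tfrac12q^{2n-1}$, which you yourself list as occurring, can only come from the points of $\pi\setminus\O$, so your stated biconditional contradicts your own conclusion. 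With these two repairs your plan coincides with the paper's proof, including the plane counts ($q$ solids of $\H^-$ through a plane meeting $\pi$ in an external line, $\tfrac q2$ through a plane meeting $\pi$ in a single point off $\O$) needed for Condition (II).
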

\begin{proof} Let $\H^\pm$ be the set of hyperplanes meeting a non-singular parabolic quadric $\Q(2n,q)$ in $|\Q^\pm(2n-1,q)|$ points. In view of Lemma \ref{sanitycheck} we only need to verify Condition (II).

 Let $\Q=\Q(2n,q)$ with nucleus $N$ and let $\pi$ be a codimension $2$ space contained in a hyperplane of $\H^\pm$. Since $\H^\pm\cap \Q=\Q^\pm(2n-1,q)$, we know that $\pi$ intersects $\Q$ either in a $\Q(2n-2,q)$ or a cone $p\Q^\pm(2n-3,q)$. The hyperplane $H$ spanned by $N$ and $\pi$ meets $\Q$ in $\frac{q^{2n-1}-1}{q-1}$ points, while all other hyperplanes through $\pi$ meet either in $|\Q^+(2n-1,q)|$ or $|\Q^-(2n-1,q)|$ points. 

Suppose that $\pi$ intersects $\Q$ in $\Q(2n-2,q)$. If $X$ is the number of hyperplanes of $\H^\pm$ through $\pi$, then counting the number of points of $\Q$ in hyperplanes through $\pi$ yields that
 $$\frac{q^{2n-1}-1}{q-1}+X(|\Q^\pm(2n-1,q)|-|\Q(2n-2,q)|)+(q-X)(|\Q^\mp(2n-1,q)|-|\Q(2n-2,q)|)=|\Q(2n,q)|,$$ and it follows that $X=\frac12q.$

If $\pi$ intersects $\Q$ in a cone $p\Q^\pm(2n-3,q)$, then no hyperplanes meeting $\Q$ in $\Q^-(2n-1,q)$ can go through $\pi$. So, apart from the hyperplane $H$ containing $N$, all other $q$ hyperplanes through $\pi$ belong to $\H^\pm$. We conclude that a codimension $2$ space lies on $0$, $\frac12q$ or $q$ hyperplanes of $\H^\pm$, and hence, $\H^\pm$ satisfies Condition (II).


Now let $\H^-$ be the set of solids in $\PG(4,q)$ disjoint from a hyperoval $H$ in a plane $\pi$. A point $P$ of $H$ lies on $0$ elements of $\H^-$, a point $Q$ of $\pi$, not in $H$, lies on $\frac q2$ external lines to $H$ in $\pi$, each lying on $q^2$ solids not containing $\pi$, which means $Q$ lies on $\frac12q^3$ solids of $\H^-$. Let $R$ be a point of $\PG(4,q)$ not in $\pi$. For a given line $\ell$ of $\pi$ there are exactly $q$ solids containing $\ell$ and $R$ but not $\pi$. Since there are $\frac12(q^2-q)$ external lines to $H$ in $\pi$ we obtain that $R$ lies on $\frac12(q^3-q^2)$ elements of $\H^-$.

Any plane containing at least one point of $H$ lies on $0$ solids of $\H^-$. A plane $\mu$ meeting $\pi$ in an external line lies on precisely $q$ solids of $\H^-$, namely those solids through $\mu$ different from $\langle \pi,\mu\rangle$. Now consider a plane $\nu$ meeting $\pi$ exactly in the point $S\notin H$. Each of the $\frac q2$ external lines to $H$ through $S$ gives rise to a solid of $\H^-$ and vice versa. So we find that $\nu$ lies on $\frac q2$ solids of $H^-$.
\end{proof}

To prove Theorem \ref{th:main}, we will use the following result by De Winter and Schillewaert.
\begin{theorem}\cite[Theorem 1.6]{DWS}\label{SDW-JS}
If a point set $\K$ in $\PG(n,q)$, $n\geq 4$, $q>2$, or $\PG(4,2)$ has the same intersection numbers with respect to hyperplanes and subspaces of codimension $2$ as a non-singular polar space $\P\in\{\H(n,q),\Q^+(n,q),\Q^-(n,q),\Q(n,q)\}$, then $\K$ is the point set of the non-singular polar space $\P$.
\end{theorem}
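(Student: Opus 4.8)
The plan is to prove this characterisation of the finite classical polar spaces by induction on $n$, using the hyperplane condition and the codimension-$2$ condition as, respectively, the zeroth- and first-order recursive data that drive the induction. The hyperplane condition alone says precisely that $\K$ is a \emph{quasi-polar space} modelled on $\P$ (the analogue of the parabolic quasi-quadrics of this paper), so the substance of the theorem is that such a set which moreover has the correct intersection numbers with codimension-$2$ subspaces is a genuine classical polar space. For the base case $n=4$ (together with the exceptional space $\PG(4,2)$) the relevant polar spaces are $\Q(4,q)$ and, when $q$ is a square, $\H(4,q)$, and one invokes the known characterisations of these varieties by their intersection numbers with planes and solids; for $\PG(4,2)$ this is a finite check, compare \cite[Lemma 1.6]{B-H-J-S}.

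For the inductive step, assume the result in dimension $n-1$ and let $\K\subseteq\PG(n,q)$ have the intersection numbers of the polar space $\P$ of rank $r$. The hyperplanes of $\PG(n,q)$ split into \emph{secant} types, meeting $\K$ like a non-degenerate polar space of rank $r$ or $r-1$, and \emph{tangent} types, meeting $\K$ like a cone over such a space. The heart of the argument is to show that for every secant hyperplane $H$ the section $\K\cap H$ has, inside $H\cong\PG(n-1,q)$, the intersection numbers with hyperplanes of $H$ \emph{and} with codimension-$2$ subspaces of $H$ of a non-degenerate polar space $\P'$ of rank $r$ or $r-1$; the induction hypothesis then identifies $\K\cap H$ with $\P'$. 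The hyperplane numbers of $\K\cap H$ are immediate, as a hyperplane of $H$ is a codimension-$2$ space of $\PG(n,q)$. The codimension-$2$ numbers of $\K\cap H$, which a priori require codimension-$3$ information about $\K$ in $\PG(n,q)$, are obtained by double counting: for a codimension-$2$ space $\sigma$ of $H$, hence a codimension-$3$ space of $\PG(n,q)$, one has $\sum_{\pi}|\K\cap\pi|=|\K|+(q^2+q)\,|\K\cap\sigma|$, the sum running over the $q^2+q+1$ codimension-$2$ spaces $\pi$ of $\PG(n,q)$ containing $\sigma$; combining this with the distribution of secant and tangent types among these $\pi$, itself pinned down by condition (II) and the same counting one dimension higher, determines $|\K\cap\sigma|=|(\K\cap H)\cap\sigma|$.

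Once every secant section of $\K$ is known to be a classical polar space, one globalises: fixing $P\in\K$, the secant sections through $P$ are classical polar spaces with a common vertex-free part, and their defining forms can be shown to patch to a single quadratic or Hermitian form on $\PG(n,q)$ whose zero set is $\K$ (equivalently, in the quadric case one verifies a Buekenhout-type tangency condition for the lines through $P$ and applies the classification of quadratic sets), while degenerate possibilities such as cones or the empty set are excluded by comparing cardinalities with the prescribed intersection numbers. The main obstacle is the central claim above --- bootstrapping one level deeper to recover the codimension-$2$ intersection numbers of the secant sections from only hyperplane- and codimension-$2$-level data about $\K$, and in particular controlling the secant/tangent distribution through a codimension-$3$ space without yet knowing that $\K$ is a polar space. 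This is exactly where $q>2$ enters, since for $q=2$ the relevant linear systems over $\Fq$ degenerate, which is why $\PG(4,2)$ must be treated on its own.
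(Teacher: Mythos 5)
You should first note that the paper contains no proof of this statement at all: it is imported verbatim from De Winter--Schillewaert \cite{DWS} and used as a black box, so your reconstruction has to stand entirely on its own. On its own terms it has a genuine gap exactly at the step you yourself call the heart of the argument. The identity $\sum_{\pi}|\K\cap\pi|=|\K|+(q^{2}+q)\,|\K\cap\sigma|$, summed over the $q^{2}+q+1$ codimension-$2$ spaces $\pi$ through a codimension-$3$ space $\sigma$, is correct, but to extract $|\K\cap\sigma|$ from it you must already know the multiset of values $|\K\cap\pi|$ for those $\pi$, i.e.\ precisely the secant/tangent distribution through $\sigma$ that you are trying to determine; the argument is circular as stated. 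Your proposed escape, that this distribution is ``pinned down by condition (II)'', is not available: condition (II) is a hypothesis of the main theorem of this paper (a lower bound on the number of members of $\H^{\pm}$ through a codimension-$2$ space), not of the statement under discussion, whose only hypotheses are the hyperplane and codimension-$2$ intersection numbers. Nor does the standard second-moment count repair the step: two points of $\K\setminus\sigma$ lie in a common $\pi\supseteq\sigma$ only if they span the same space $\langle\sigma,P\rangle$ with $\sigma$, so $\sum_{\pi}|\K\cap\pi|\bigl(|\K\cap\pi|-1\bigr)$ cannot be evaluated from the given data without further geometric information about $\K$. Consequently the inductive step is not established, and with it neither is the theorem.

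The remaining ingredients are also only gestured at --- the base case $n=4$ and $\PG(4,2)$ is delegated to ``known characterisations'' without naming them, and the globalisation (patching the forms of the hyperplane sections, or verifying the tangency condition needed to apply the theory of quadratic sets, and excluding degenerate or Hermitian-versus-quadric confusions) is asserted rather than argued --- which would be tolerable as citations if the central bootstrapping step worked, but it does not. Be aware that the actual proof in \cite{DWS} is a substantial, paper-length argument that ultimately reduces to characterisations by intersection numbers with low-dimensional subspaces; it is not recoverable from the single linear count you propose, and your closing explanation of the exclusion $q>2$ (that ``the linear systems degenerate'') is speculation rather than a proof obstacle you have actually identified.
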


\section{Proof of the characterisation results}
\subsection{Proof of Proposition \ref{pr:main}}

Throughout this section, let $\H^{\pm}$ be a non-empty set of hyperplanes in $\PG(2n,q)$, $q$ even, $n\geq 2$, such that
 \begin{itemize}
\item[(I)] every point of $\PG(2n,q)$ lies on $0$ or $\frac12q^{2n-1}$ or $\frac12q^{n}(q^{n-1}\pm1)$ hyperplanes of $\H^{\pm}$.
\end{itemize}


By assumption (I), there are three types of points. A point is called a
\begin{itemize}
\item \emph{red point} if it lies in $0$ hyperplanes of $\H^{\pm}$,
\item \emph{white point} if it lies in $\frac12q^{2n-1}$ hyperplanes of $\H^{\pm}$,
\item \emph{black point} if it lies in $\frac12q^{n}(q^{n-1}\pm 1)$ hyperplanes of $\H^{\pm}$.
\end{itemize}

Throughout, we will denote the number of red points of $\PG(2n,q)$ by $r$, the number of white points of $\PG(2n,q)$ by $w$, and the number of black points of $\PG(2n,q)$ by $b$.

Our aim is to show that the black points form a parabolic quasi-quadric with a unique red point as its
nucleus.

A hyperplane in $\H^{\pm}$ does not contain any red points, and we partition the hyperplanes of $\PG(2n,q)$ into $\H^{\pm}$ and two further types of hyperplanes:
\begin{itemize}
\item
let $\T$ be the set of hyperplanes containing at least one red point,
\item
let $\M^{\pm}$ be the set of hyperplanes not in $\H^{\pm}$ that contain no red points.
\end{itemize}

We proceed with a series of lemmas, based on the ideas of \cite{B-H-J,B-H-J-S}.

\begin{lemma}\Label{lem:A-size1}
Let  $$\h^\pm= \dfrac{|\H^{\pm}|}{\frac12q^{n}},$$  then $\h^\pm$ is an integer congruent to either $0$ or $\pm 1$ modulo $q^{n-1}$.
\end{lemma}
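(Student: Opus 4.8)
The plan is to derive the statement purely from two standard double counts over the points of $\PG(2n,q)$, using at the very end that $q$ even makes $q^{n-1}$ a power of $2$. Write $w$ and $b$ for the numbers of white and black points. Counting incident pairs (point, member of $\H^\pm$) gives
$$\tfrac12 q^{2n-1}\,w+\tfrac12 q^{n}(q^{n-1}\pm1)\,b=|\H^\pm|\,\tfrac{q^{2n}-1}{q-1},$$
and counting incident triples (point, unordered pair of distinct members of $\H^\pm$ through that point), using that two distinct hyperplanes of $\PG(2n,q)$ meet in a codimension-$2$ space, gives
$$\tfrac12 q^{2n-1}\!\left(\tfrac12 q^{2n-1}-1\right)w+\tfrac12 q^{n}(q^{n-1}\pm1)\!\left(\tfrac12 q^{n}(q^{n-1}\pm1)-1\right)b=|\H^\pm|\big(|\H^\pm|-1\big)\tfrac{q^{2n-1}-1}{q-1}.$$
Since the two hyperplane-degrees $\tfrac12 q^{2n-1}$ and $\tfrac12 q^{n}(q^{n-1}\pm1)$ are both multiples of $\tfrac12 q^n$, while $\tfrac{q^{2n}-1}{q-1}$ is odd and $\tfrac12 q^n$ is a power of $2$, the first equation already forces $\tfrac12 q^n\mid|\H^\pm|$; hence $\h^\pm$ is an integer.

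Next I set $|\H^\pm|=\tfrac12 q^n\,\h^\pm$ and extract two independent expressions for $b$ modulo $q^{n-1}$. Dividing the first equation by $\tfrac12 q^n$ gives $q^{n-1}w+(q^{n-1}\pm1)b=\h^\pm\tfrac{q^{2n}-1}{q-1}$; reducing modulo $q^{n-1}$, and using that $\tfrac{q^{m}-1}{q-1}\equiv\tfrac{q^{n-1}-1}{q-1}\pmod{q^{n-1}}$ for every $m\ge n-1$, yields $\pm b\equiv\h^\pm\tfrac{q^{n-1}-1}{q-1}$. Separately, eliminating $w$ from the two equations makes the coefficient of $b$ equal to $\tfrac12 q^{n}(q^{n-1}\pm1)\cdot(\pm\tfrac12 q^n)=\pm\tfrac14 q^{2n}(q^{n-1}\pm1)$, while the resulting right-hand side is visibly $\tfrac14 q^{2n}$ times $\h^\pm$ times an integer; cancelling $\tfrac14 q^{2n}$ and reducing the remaining identity modulo $q^{n-1}$ gives $b\equiv(\h^\pm)^{2}\,\tfrac{q^{n-1}-1}{q-1}\pmod{q^{n-1}}$.

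Comparing the two congruences for $b$ and using that $\tfrac{q^{n-1}-1}{q-1}$ is odd, hence a unit modulo the $2$-power $q^{n-1}$, I obtain $\h^\pm(\h^\pm\mp1)\equiv0\pmod{q^{n-1}}$. Because $q$ is even, $q^{n-1}$ is a power of $2$, and $\h^\pm$ and $\h^\pm\mp1$ are consecutive integers, hence coprime; so $q^{n-1}$ must divide one of the two factors, which is precisely the assertion that $\h^\pm\equiv0$ or $\h^\pm\equiv\pm1\pmod{q^{n-1}}$. The two double counts and the elimination of $w$ are routine; the points that need care are the consistent bookkeeping of the $\pm/\mp$ convention through the reductions mod $q^{n-1}$, and the final (short but non-formal) step, where the fact that $q^{n-1}$ is a prime power turns a quadratic congruence into the stated dichotomy via coprimality of consecutive integers.
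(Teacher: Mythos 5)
Your proof is correct and follows essentially the same route as the paper: the same two double counts (point--hyperplane pairs and point--hyperplane-pair triples), the same divisibility argument giving $\tfrac12 q^n\mid|\H^\pm|$, and the same endgame $\h^\pm(\h^\pm\mp1)\equiv0\pmod{q^{n-1}}$ resolved by coprimality of consecutive integers modulo a power of $2$. The only (cosmetic) difference is that you eliminate $w$ and compare two congruences for $b$ modulo $q^{n-1}$, whereas the paper eliminates $b$ and extracts the same quadratic congruence from the integrality of its explicit expression for $w$.
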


\begin{proof}
We first show that $\frac12q^{n}$ divides $|\H^{\pm}|$.
Count in two ways the incident pairs $(P,\Pi)$ where $\Pi$ is a hyperplane in $\H^{\pm}$ and $P$ is a point of $\Pi$:
\begin{eqnarray}
w\tfrac12 q^{2n-1} + b\tfrac12q^{n}(q^{n-1}\pm 1)& =& |\H^{\pm}|\frac{q^{2n}-1}{q-1}.\label{eqn:OP}
\end{eqnarray}
Since $q=2^h$ we have that $\gcd(\frac12q^{n},\frac{q^{2n}-1}{q-1})=1$. Moreover, $\frac12q^{n}$ divides the left hand side of \eqref{eqn:OP}. It follows that $\frac12q^{n}$ divides $|\H^{\pm}|$.

To show that $\h^\pm= \dfrac{|\H^{\pm}|}{\frac12q^{n}}$ is congruent to either $0$ or $\pm 1$ modulo $q^{n-1}$.
we count in two ways incident triples $(P , \Pi,\Sigma)$ where $\Pi,\Sigma$ are distinct hyperplanes of $\H^\pm$ and $P$ is a point in $\Pi\cap\Sigma$. We have
\begin{eqnarray}
\small
w\tfrac12q^{2n-1}(\tfrac12q^{2n-1}-1)+b\tfrac12 q^n(q^{n-1}\pm1)(\tfrac12 q^n(q^{n-1}\pm1)-1)&=&|\H^{\pm}|(|\H^{\pm}|-1)\frac{q^{2n-1}-1}{q-1}\label{eqn:1}
\end{eqnarray}
Substituting (\ref{eqn:OP}) into (\ref{eqn:1}) to eliminate $b$ and then solving for $w$ and using $|\H^{\pm}|=\frac12q^n\h^\pm$ gives
\begin{align}
w=\frac{\h^\pm(q^{2n}-1\pm (\h^\pm-\h^\pm q^{2n-1}-2q^n+q^{3n-1}+q^{n-1}))}{q^{n-1}(q-1)}\label{eqn:wq}
\end{align}
Thus the numerator must be divisible by $q^{n-1}$ and hence $\h^\pm (-1\pm\h^\pm)\equiv 0 \pmod{q^{n-1}}$. 
As $q$ is a power of 2, it follows that either $\h^\pm \equiv 0 \pmod{q^{n-1}}$ or $\h^\pm \equiv \pm 1 \pmod{q^{n-1}}$.
\end{proof}

\begin{lemma}\Label{item:hsize} 

There are $|\Q(2n,q)|$ black points and one red point, or $n=2$, $|\H^-|=\frac12q^3(q-1)$ and there are $q+2$ red points, $q^2-1$ white points and $q^4+q^3$ black points.
\end{lemma}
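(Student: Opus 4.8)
The plan is to reduce everything to the single parameter $\h:=\h^\pm$ and then determine its value. For a fixed value of $|\H^\pm|$, equivalently of $\h$, there are three linear relations among the point numbers $r,w,b$: the total point count $r+w+b=\frac{q^{2n+1}-1}{q-1}$, the incidence count \eqref{eqn:OP}, and the triple count \eqref{eqn:1}. Since \eqref{eqn:OP} and \eqref{eqn:1} are independent linear equations in $w$ and $b$, solving this system expresses $w$, $b$, and hence $r$, as explicit functions of $\h$: the value of $w$ is \eqref{eqn:wq}, then $b(q^{n-1}\pm1)=\h\,\frac{q^{2n}-1}{q-1}-wq^{n-1}$, and finally $r=\frac{q^{2n+1}-1}{q-1}-w-b$. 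Each of $w,b,r$ is thus a polynomial of degree at most $2$ in $\h$ over a fixed factor depending only on $q$ and $n$.

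Next I use that $w,b,r$ are non-negative integers and that $\h\geq1$ (since $\H^\pm\neq\emptyset$ and $\tfrac12q^n\mid|\H^\pm|$ by Lemma \ref{lem:A-size1}). Both $w$ and $b$ have $\h$ as a factor, say $w=\h\,L_w(\h)$ and $b=\h\,L_b(\h)$ with $L_w,L_b$ linear in $\h$; as $\h\geq1$, the conditions $w\geq0$ and $b\geq0$ read $L_w(\h)\geq0$ and $L_b(\h)\geq0$. One of $L_w,L_b$ is increasing and the other decreasing, and a short computation of their roots (which come out of the form $q^n-\frac{q^{n-1}(q-1)}{q^{2n-1}-1}$ and similar expressions, each squeezed strictly between two consecutive integers near $q^n$) shows that $\h$ is confined to $q^n-q\leq\h\leq q^n-1$ in the elliptic case, and $q^n\leq\h\leq q^n+q-1$ in the hyperbolic case — in either case an interval containing exactly $q$ integers.

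Now I invoke Lemma \ref{lem:A-size1}: $\h\equiv0$ or $\pm1\pmod{q^{n-1}}$. In the elliptic case, writing the admissible integers as $\h=q^n-j$ with $1\leq j\leq q$, the congruence becomes $j\equiv0,1$ or $-1\pmod{q^{n-1}}$; if $n\geq3$ then $q<q^{n-1}$, so only $j=1$ survives, i.e.\ $\h=q^n-1$, whereas if $n=2$ one also gets $j=q$ (so $\h=q^2-q$) and $j=q-1$ (so $\h=q^2-q+1$). The hyperbolic case is analogous, leaving $\h=q^n+1$ together with, when $n=2$, the two further values $\h=q^2$ and $\h=q^2+q-1$. (For $\PG(4,2)$ the congruence is vacuous, and one inspects the two or three integers in the interval directly.) Substituting $\h=q^n\pm1$ gives $w=q^{2n}-1$, $b=\frac{q^{2n}-1}{q-1}=|\Q(2n,q)|$ and $r=1$ — the first alternative of the statement; substituting $\h=q^2-q$ when $n=2$ gives $|\H^-|=\tfrac12q^3(q-1)$, $w=q^2-1$, $b=q^4+q^3$ and $r=q+2$ — the second.

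It then remains to discard the spurious candidates $\h=q^n$ (hyperbolic, any $n$), $\h=q^2+q-1$ (hyperbolic, $n=2$) and $\h=q^2-q+1$ (elliptic, $n=2$); this is where integrality of $w,b,r$ is used decisively, and I expect it to be the main effort. For $\h=q^n$ a short computation gives $b=\frac{q^{2n-1}}{q^{n-1}+1}$, which is not an integer since $\gcd(q^{n-1}+1,q)=1$. For each of the other two, $w$ comes out as $\frac1q$ times a product of two integers whose product is $\equiv\pm2\pmod q$, so $w\notin\Z$ unless $q=2$ — and for $q=2$ that value simply coincides with $q^n\pm1$. The delicate part throughout is pinning down the interval endpoints precisely enough to see that the unique value congruent to $0$ modulo $q^{n-1}$ that survives integrality is $q^2-q$, and only when $n=2$ — which is exactly why the exceptional hyperoval configuration occurs solely for $\PG(4,q)$.
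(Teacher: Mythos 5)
Your argument is correct, and the computations you leave implicit do check out (I verified the interval endpoints, the value $b=\tfrac{q^{2n-1}}{q^{n-1}+1}$ at $\h^\pm=q^n$, the substitutions at $\h^\pm=q^n\pm1$ and $\h^-=q^2-q$, and the two quotients $w=\tfrac{(q^2+q-1)(q^2+2)}{q}$, $w=\tfrac{(q^2-q+1)(q^2+2q+2)}{q}$ for your extra candidates). Your route differs from the paper's in how the finite list of candidates for $\h^\pm$ is produced and how $w,b,r$ are then extracted. The paper fixes a hyperplane $\Sigma\in\H^\pm$, counts its white and black points $s,t$, solves \eqref{eqn:3-pt-A-m} to see that $t$ is the \emph{same} for every element of $\H^\pm$, bounds $\h^\pm$ via $0\le t\le\frac{q^{2n}-1}{q-1}$, and then recovers $b$ and $w$ from the incidence counts \eqref{eqb} and \eqref{eqw}, which genuinely use that constancy; the by-product that every element of $\H^\pm$ contains exactly $|\Q^\pm(2n-1,q)|$ black points is reused later in the proof of Proposition \ref{pr:main}. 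You instead stay entirely global: \eqref{eqn:OP} and \eqref{eqn:1} (already available from Lemma \ref{lem:A-size1}) form a nondegenerate linear system determining $w$ and $b$ as functions of $\h^\pm$, the bound on $\h^\pm$ comes from $w\ge0$, $b\ge0$, and the final values are read off by substitution. This buys a shorter, more mechanical argument that never needs per-hyperplane intersection numbers, at the cost of not producing the constancy of $t$ that the paper exploits afterwards (not needed for the statement itself, so no gap). Two small points: you read Lemma \ref{lem:A-size1} as allowing all three residues $0,1,-1$ modulo $q^{n-1}$, whereas with the paper's sign convention only $0$ and $+1$ occur in the hyperbolic case and only $0$ and $-1$ in the elliptic case; this is harmless since you correctly eliminate the two resulting extra candidates $q^2+q-1$ and $q^2-q+1$ by integrality of $w$, but it is avoidable work. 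Also, your sentence enumerating the hyperbolic candidates suggests $\h^+=q^n$ arises only for $n=2$, though it lies in the interval for every $n$; you do list and discard it for all $n$ in the final step, so this is only a slip of phrasing.
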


\begin{proof}
Let $\Sigma\in\H^\pm$, let $\sw $ be the number of white points in $\Sigma$ and let $\sb $ be the number of black points in $\Sigma$. As $\Sigma$ does not contain any red points, we have
\begin{eqnarray}
\sw +\sb &=&\frac{q^{2n}-1}{q-1}.\label{eqn:3-pt-A}
\end{eqnarray}
Count in two ways incident pairs $(P,\Pi)$ where $\Pi\in\H^\pm$, $\Sigma\ne \Pi$ and $P$ is a point in $\Sigma\cap\Pi$. Using the notation of Lemma \ref{lem:A-size1}, we have that
\begin{eqnarray*}
\sw (\tfrac12 q^{2n-1}-1)+\sb (\tfrac12q^{n}(q^{n-1}\pm 1)-1)&=&(\h^\pm\tfrac12q^n-1)\frac{q^{2n-1}-1}{q-1}.
\end{eqnarray*}

Using \eqref{eqn:3-pt-A} to eliminate $s$ and then solving for $\h^\pm$
 yields
\begin{eqnarray}
\h^\pm&=&\frac{q^{3n-1}-2q^{n}+q^{n-1}\pm t (q-1)}{q^{2n-1}-1}.\label{eqn:3-pt-A-m}
\end{eqnarray}

Note that $t$ and hence $s$ are thus constants.

Since $ 0\leq t \leq \frac{q^{2n}-1}{q-1}$ 
it follows that 
\begin{align}
q^n-1< \h^+< q^n+q, {\mathrm{ and}}\\
q^n-q-1<\h^-<q^n.
\end{align}

Since we have shown in Lemma \ref{lem:A-size1} that $\h^\pm$ is congruent to $0$ or $\pm 1 \pmod{q^{n-1}}$ it follows that $\h^+\in \{q^n,q^n+1\}$. We also see that if $n>2$, then $\h^-=q^n-1$ and if $n=2$, then $\h^-\in \{q^2-q,q^2-1\}$.
Using \eqref{eqn:3-pt-A-m}, we conclude that if $(|\H^\pm|,t)\neq (\tfrac12q^{n}(q^n\pm 1),|\Q^\pm(2n-1,q)|)
$, then $(|\H^+|,t)=(\tfrac12q^{2n},q^{n-1})$ or $n=2$ and $(|\H^-|,t)=(\frac12q^2(q^2-q),q^2(q+1))$.

Count in two ways incident pairs
 $(P,\Sigma)$ where $\Sigma\in\H^\pm$ and $P$ is black point in $\Sigma$:
\begin{eqnarray}
b\tfrac{q^n}{2}(q^{n-1}\pm1)= |\H^\pm|t. \label{eqb}
\end{eqnarray}

Count in two ways incident pairs
 $(P,\Sigma)$ where $\Sigma\in\H^\pm$ and $P$ is a white point in $\Sigma$:
\begin{eqnarray}
w\tfrac12q^{2n-1}= |\H^\pm|s. \label{eqw}
\end{eqnarray}

%
%

Now if $(|\H^+|,t)=(\tfrac12q^{2n},q^{n-1})$, then \eqref{eqb} yields $b=\tfrac{q^{2n-1}}{q^{n-1}+1}$, a contradiction since $b$ is an integer.

Assume that $|\H^\pm|=\tfrac12q^{n}(q^n\pm 1)$ and $t=|\Q^\pm(2n-1,q)|$. It then follows from \eqref{eqb} that $b=|\Q(2n,q)|$, and from \eqref{eqw} that $w=\frac{q^{2n+1}-1}{q-1}-b-1$. This implies that there is a unique red point.

Now assume that $n=2$ and $(|\H^-|,t)=(\frac12q^2(q^2-q),q^2(q+1))$. It follows from \eqref{eqn:3-pt-A} that $s=q+1$, from \eqref{eqb} that $b=q^4+q^3$ and from \eqref{eqw} that $w=q^2-1$. Since $w+b+r=\frac{q^5-1}{q-1}$ we conclude that $r=q+2$.
\end{proof}

 The following proposition deals with the case $n=2$ and $(|\H^-|,t)=(\frac12q^2(q^2-q),q^2(q+1))$, using only Condition (I). We will show that the red points lie in a plane, essentially replacing Lemma 2.6 of \cite{B-H-J} which uses (their stronger version of) Condition (II). Note that Proposition \ref{hyperovalcase} also replaces Lemmas 2.7, 2.8, 2.9 of \cite{B-H-J}, but these could have been applied verbatim as they don't make use of Condition (II).

\begin{proposition}\label{hyperovalcase}If $n=2$ and $|\H^-|=\frac12q^2(q^2-q)$, then the $q+2$ red points form a hyperoval $\O$ in a plane and $\H^-$ is the set of solids skew from $\O$.
\end{proposition}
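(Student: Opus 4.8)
The plan is to work entirely within $\PG(4,q)$, $q$ even, with the data from Lemma~\ref{item:hsize}: there are $q+2$ red points, $q^2-1$ white points, $q^4+q^3$ black points, $|\H^-|=\frac12q^3(q-1)$, and every solid of $\H^-$ contains $s=q+1$ white points and $t=q^2(q+1)$ black points (hence no red points). I would first nail down the local structure at a red point. Fix a red point $P$. Counting incident pairs $(\Sigma,\ell)$ where $\Sigma\in\H^-$ and $\ell$ is a line on $P$ inside $\Sigma$, together with the fact that each solid of $\H^-$ avoids $P$ and meets every plane on $P$ in a line, one finds how many solids of $\H^-$ meet a given plane $\mu$ through $P$; since a solid of $\H^-$ meets $\mu$ in a line not through $P$, standard double counting forces the number of red points on any line through $P$, and more usefully the number of red points in any plane through $P$, to take only a few values. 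The goal of this first block of lemmas (the analogue of Lemmas 2.7--2.9 of \cite{B-H-J}, which as the paper notes carry over verbatim) is: every line meets the red set in $0$, $1$, $2$, or $q+2$ points, and through every red point there is at least one line entirely contained in the red set is \emph{ruled out}, i.e.\ every line on a red point meets the red set in at most $2$ points — equivalently the $q+2$ red points form an arc-like configuration locally.

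Next I would prove that the $q+2$ red points are coplanar. Suppose not; then three red points span a plane $\mu$ and there is a red point $P_0\notin\mu$. Project from $P_0$: each other red point gives a line $\langle P_0,P_i\rangle$, and by the local arc property these $q+1$ lines are pairwise distinct and meet only in $P_0$, so they sweep out $q+1$ points in each plane not through $P_0$ — but counting red points via solids of $\H^-$ (each such solid meets the red set in $0$ points, each plane-section argument) gives a contradiction with the total $q+2$. The cleanest route is probably the standard hyperoval argument: show the red set is a cap (no three collinear is impossible since we'd get $q+2$ points and want a hyperoval, so in fact we must show no \emph{four} coplanar red points exist unless \emph{all} of them are coplanar), then a counting argument on the number of solids of $\H^-$ through a red line versus a red-free line pins down that red points lie in a common hyperplane. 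This coplanarity step is the one I expect to be the main obstacle: without the strong Condition (II) of \cite{B-H-J}, one has only the weak combinatorial handle of Condition (I) (through Lemma~\ref{item:hsize}'s constants), so the argument must extract the coplanarity purely from the point-type distribution and the solid count $|\H^-|=\frac12q^3(q-1)$, which is delicate.

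Once the red points are known to lie in a plane $\pi$, I would show $|\pi\cap(\text{red set})|=q+2$, i.e.\ all red points lie in $\pi$; this follows because a second plane of red points would overload the count. Then the red set is a $(q+2)$-arc in $\PG(2,q)$ with $q$ even, hence a hyperoval $\O$ (every $(q+2)$-arc in $\PG(2,q)$, $q$ even, is a hyperoval — here I use the local ``no three collinear'' established earlier, or derive it directly: a red line would contain $q+2>q+1$ red points, impossible). Finally I would identify $\H^-$ exactly. A solid $\Sigma$ contains no red point iff $\Sigma\cap\pi$ is a line (or point) disjoint from $\O$ — but $\Sigma\cap\pi$ is a line unless $\Sigma\supset\pi$, and no line of $\PG(2,q)$ is disjoint from a hyperoval, while if $\Sigma$ meets $\pi$ in exactly a point that point must be off $\O$. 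Hmm — so ``no red point'' alone does not single out $\H^-$; here I invoke the count: Lemma~\ref{axiom-verification} shows the set of solids skew to $\O$ satisfies (I) with exactly these parameters, and the solids skew to $\O$ are precisely those meeting $\pi$ in a single point off $\O$ (there are $\frac12q^3(q-1)$ of them, matching $|\H^-|$). Since $\H^-$ and the ``skew-to-$\O$'' set are both sets of $\frac12q^3(q-1)$ solids containing no red point, and the solids through $\pi$ and the solids meeting $\pi$ in a (necessarily secant or external-to-a-point, but a line always meets $\O$) line all contain red points, $\H^-$ must be exactly the set of solids meeting $\pi$ in a point off $\O$, i.e.\ the solids skew to $\O$. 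I would finish by double-checking this identification is forced: any solid of $\H^-$ meets $\pi$ in a subspace containing no red point, hence not in a line (a line of $\pi$ meets the hyperoval), hence in a point, hence is skew to $\O$; conversely the cardinalities agree, so equality holds.
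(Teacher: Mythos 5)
Your proposal has a genuine gap exactly where you predict one: the coplanarity of the $q+2$ red points is never actually proved, only flagged as ``delicate'' with two unexecuted sketches (projection from an exterior red point; comparing solid counts through red versus red-free lines). The idea you are missing is the precise colour composition of lines through red points, which the paper extracts from a single count. For a line $L$ with at least one red point, every solid of $\H^-$ meets $L$ in exactly one (white or black) point, so with $u$ white and $v$ black points on $L$ one gets $u\cdot\tfrac12q^3+v\cdot\tfrac12(q^3-q^2)=\tfrac12(q^4-q^3)$. This forces not just ``at most two red points per line'' (which is all your first block aims for), but the sharper dichotomy: a secant (two red points) has $v=0$, i.e.\ $q-1$ white and no black points, while a tangent (one red point) has $v=q$, i.e.\ $q$ black and no white points. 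Coplanarity then falls out immediately: if $P_1,P_2,P_3$ are red (necessarily non-collinear) spanning a plane $\pi$, every line $M$ through $P_1$ in $\pi$ other than $P_1P_2,P_1P_3$ meets the all-white secant $P_2P_3$ in a white point, hence cannot be a tangent, hence is a secant carrying one further red point; this yields $q-1$ additional red points in $\pi$, so all $q+2$ red points lie in $\pi$ and form a hyperoval. Without this tangent/secant dichotomy your coplanarity step has no engine, and the proposal does not close.

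Two further slips in your final identification step, though they are repairable: a hyperplane of $\PG(4,q)$ can never meet the plane $\pi$ in a single point (the intersection has dimension at least $3+2-4=1$), and a hyperoval does have lines disjoint from it --- there are $\tfrac12(q^2-q)$ external lines, which the paper itself uses in Lemma~\ref{axiom-verification}. The solids skew to $\O$ are precisely those meeting $\pi$ in an external line, $\tfrac12(q^2-q)\cdot q^2=\tfrac12q^2(q^2-q)$ of them; since every solid of $\H^-$ avoids the red set $\O$ and $|\H^-|=\tfrac12q^2(q^2-q)$, the containment $\H^-\subseteq\{\text{solids disjoint from }\O\}$ is an equality. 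That simple cardinality argument is the paper's finish and is the correct replacement for your (internally inconsistent) discussion of point-intersections with $\pi$.
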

\begin{proof} Consider a line $L$ containing at least one red point. Note that every solid of $\H^-$ intersects $L$ in precisely one point, which is necessarily white or black.  Let $u$ be the number of white points on $L$ and $v$ be the number of black points on $L$. Counting the solids of $\H^-$ through points of $L$ in two ways, we have that
\begin{align}u\tfrac12q^3+v\tfrac12(q^3-q^2)=\tfrac12(q^4-q^3).\label{uv} \end{align}

 Hence $u+v\geq q-1$, so each line has at most $2$ red points.
Moreover, if $u+v=q-1$, it follows from \eqref{uv} that $v=0$. So every line through $2$ red points contains precisely $q-1$ white points.
If $L$ contains exactly one red point, equation of \eqref{uv} becomes
$$(q-v)q^3+vq^3-vq^2=q^4-q^3$$ which implies that $v=q$, so a tangent line through a red point contains precisely $q$ black points. Now consider $3$ red points $P_1,P_2,P_3$, which then necessarily span a plane $\pi$. The line $P_2P_3$ contains $q-1$ white points. Every line $M$ through $P_1$, different from $P_1P_2$ and $P_1P_3$ in the plane $\pi$ meets $P_2P_3$ in a white point. Since a tangent line through a red point 
does not contain white points, it follows that $M$ contains precisely one extra red point. Hence there are at least $q-1$ red points in $\pi$, different from $P_1,P_2,P_3$. Since there are $q+2$ red points, it follows all these $q+2$ points lie in $\pi$ and as every line contains at most $2$ of them, the red points form a hyperoval.
Recall that no solid of $\H^-$ contains a red point. Since the number of solids disjoint from a hyperoval is precisely $\frac12q^2(q^2-q)=|\H|$, every solid disjoint from $\O$ needs to be in $\H^-$, and the characterisation follows. 
\end{proof}

%
%

{\bf Proof of Proposition \ref{pr:main}} 

As before, denote by $t$ the number of black points in an element of $\H^\pm$.
By Lemma~\ref{item:hsize}, we either have that $n=2$ and $(|\H^-|,t)=(\frac12q^2(q^2-q),q^2(q+1))$, or $|\H^\pm|=\tfrac12q^{n}(q^n\pm1)$ and $t=|\Q^\pm(2n-1,q)|$. 
By Proposition \ref{hyperovalcase} we may assume that $|\H^\pm|=\tfrac12q^{n}(q^n\pm1)$ and $t=|\Q^\pm(2n-1,q)|$. Lemma~\ref{item:hsize} shows that in this case, there is a unique red point, so we have $|\T|=\frac{q^{2n}-1}{q-1}$ (the number of hyperplanes through a point). 

Let $\Pi\in\M^\pm$, let $a$ be the number of white points in $\Pi$ and let $b$ be the number of black points in $\Pi$. As solids in $\M^\pm$ do not contain a red point, we have
\begin{eqnarray}
a +b &=&\frac{q^{2n}-1}{q-1}.\label{eqn:Ept}
\end{eqnarray}
Count in two ways incident pairs $(P,\Sigma)$ where $\Sigma\in\H^\pm$, and $P\in\Pi\cap\Sigma$. We have
\begin{eqnarray*}
a\tfrac12 q^{2n-1}+b \tfrac12 q^{n}(q^{n-1}\pm1)&=&|\H^\pm|\frac{q^{2n-1}-1}{q-1}.
\end{eqnarray*}
Simplifying using (\ref{eqn:Ept}) gives $b = |\Q^\mp(2n-1,q)|$.

Now let $\Pi\in\T$, let $c$ be the number of white points in $\Pi$ and let $d $ be the number of black points in $\Pi$. As hyperplanes in $\T$ contain the unique red point, we have
\begin{eqnarray}
c +d &=& \frac{q^{2n}-1}{q-1}-1.\label{eqn-bing}
\end{eqnarray}
 Count in two ways incident pairs $(P,\Sigma)$ where $\Sigma\in\H^\pm$, and $P\in\Pi\cap\Sigma$, noting that as $\Sigma\in\H^\pm$, it does not contain the red point. We have
 \begin{eqnarray*}
c \tfrac12 q^{2n-1}+d \tfrac12 q^{n}(q^{n-1}\pm 1)&=&|\H^\pm|\frac{q^{2n-1}-1}{q-1}.
\end{eqnarray*}
Simplifying using (\ref{eqn-bing}) gives $d = |p\Q(2n-2,q)|$. Let $\mathcal{B}$ be the set of black points and let $N$ denote the red point. We have shown that $|\mathcal{B}|=\frac{q^{2n}-1}{q-1}$ and that every hyperplane not through $N$ meets $\mathcal{B}$ in $ |\Q^\pm(2n-1,q)|$ points, and every hyperplane through $N$ meets $\mathcal{B}$ in $|p\Q(2n-2,q)|$ points.
Finally, we need to show that every line through $N$ contains precisely one point of $\mathcal{B}$. Consider a line $L$ containing $N$. Note that every hyperplane of $\H^\pm$ then intersects $L$ in precisely one point. Let $v$ be the number of black points on $L$ and $q-v$ be the number of white points on $L$. Counting the hyperplanes of $\H^\pm$ through points of $L$ in two ways, we have that
\begin{align}v\tfrac12q^n(q^{n-1}\pm 1)+(q-v)\tfrac12q^{2n-1}=\tfrac12q^{n}(q^n\pm1), \end{align}
which indeed implies that $v=1$.
Hence, $\mathcal{B}$ is a quasi-quadric with the red point $N$ as nucleus.
$\hfill\square$

\subsection{Proof of Theorem \ref{th:main}}

Assume now that $\H^{\pm}$ is a non-empty set of hyperplanes in $\PG(2n,q)$, $n\geq 2$, $q$ even, $q>2$
, or $\PG(4,2)$, such that
\begin{itemize}
\item[(I)] every point of $\PG(2n,q)$ lies on $0$ or $\frac12q^{2n-1}$ or $\frac12q^{n}(q^{n-1}\pm1)$ hyperplanes of $\H^{\pm}$, and 
\item[(II)] every codimension $2$ space contained in an element of $\H^{\pm}$ belongs to at least $\frac12q$ of them.
\end{itemize}

%

\begin{lemma}\label{plane-count-new} If there are $|\Q(2n,q)|$ black points, then the number of black points in a codimension $2$ space is one of $|\Q(2n-2,q)|$, $|p\Q^{+}(2n-3,q)|$ or $|p\Q^{-}(2n-3,q)|$.
\end{lemma}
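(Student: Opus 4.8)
The plan is to use what Proposition~\ref{pr:main} already gives: the black points form a parabolic quasi-quadric $\K$ with nucleus the unique red point $N$, with $|\K|=\tfrac{q^{2n}-1}{q-1}$, every hyperplane through $N$ meeting $\K$ in $|p\Q(2n-2,q)|=\tfrac{q^{2n-1}-1}{q-1}$ points and every hyperplane avoiding $N$ meeting $\K$ in $|\Q^{+}(2n-1,q)|$ or $|\Q^{-}(2n-1,q)|$ points. I would first record the two elementary identities $|\Q^{\pm}(2n-1,q)|=|p\Q(2n-2,q)|\pm q^{n-1}$ and $|p\Q^{\pm}(2n-3,q)|=|\Q(2n-2,q)|\pm q^{n-1}$, and then split according to whether the codimension~$2$ space $\pi$ contains $N$. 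Throughout write $m=|\pi\cap\K|$.

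If $N\in\pi$, every one of the $q+1$ hyperplanes through $\pi$ passes through $N$, so double counting incident pairs (black point, hyperplane through $\pi$) — a black point of $\pi$ lying in all $q+1$ of them and a black point off $\pi$ in exactly one — gives $qm+|\K|=(q+1)|p\Q(2n-2,q)|$, hence $m=|\Q(2n-2,q)|$. If $N\notin\pi$, exactly one hyperplane through $\pi$ (namely $\langle N,\pi\rangle$) contains $N$; letting $x$ denote how many of the remaining $q$ lie in $\H^{\pm}$, the same count becomes $|p\Q(2n-2,q)|+x|\Q^{\pm}(2n-1,q)|+(q-x)|\Q^{\mp}(2n-1,q)|=qm+|\K|$, which collapses via the identities above to $m=|\Q(2n-2,q)|\pm(2x-q)q^{n-2}$. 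If $x=0$ this equals $|p\Q^{\mp}(2n-3,q)|$ and we are done; otherwise $\pi$ lies in at least one member of $\H^{\pm}$, and fixing such a member $H$, Condition~(II) gives $x\ge\tfrac12 q$, so it remains to show $m\in\{|\Q(2n-2,q)|,|p\Q^{\pm}(2n-3,q)|\}$, equivalently $x\in\{\tfrac12 q,q\}$.

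For this I would work inside $H$. Set $\K_H=\K\cap H$, a set of $|\Q^{\pm}(2n-1,q)|$ points in $H\cong\PG(2n-1,q)$, and for a hyperplane $\rho$ of $H$ let $s_\rho=|\rho\cap\K_H|=|\rho\cap\K|$. Each such $\rho$ is a codimension~$2$ space of $\PG(2n,q)$ contained in $H$, so it misses $N$ and, by~(II), lies in at least $\tfrac12 q$ members of $\H^{\pm}$; hence the formula from the previous paragraph applies to $\rho$ and gives $a\le s_\rho\le b$, where $\{a,b\}=\{|\Q(2n-2,q)|,|p\Q^{\pm}(2n-3,q)|\}$ are the two sizes in which a hyperplane can meet $\Q^{\pm}(2n-1,q)$ — a non-singular $\Q(2n-2,q)$ from a secant hyperplane, a cone $p\Q^{\pm}(2n-3,q)$ from a tangent one; that these are the only two possibilities is exactly where $q$ even is used, since then the restriction of the associated polarity to a hyperplane always has a single point as radical. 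Now $\sum_\rho 1$, $\sum_\rho s_\rho$ and $\sum_\rho s_\rho^2$ depend only on $|\K_H|$ (standard double counts of point–hyperplane and line–hyperplane flags in $\PG(2n-1,q)$); since $|\K_H|=|\Q^{\pm}(2n-1,q)|$ and every hyperplane section of $\Q^{\pm}(2n-1,q)$ has size $a$ or $b$, this forces $\sum_\rho(s_\rho-a)(s_\rho-b)=0$. As $a\le s_\rho\le b$ for every $\rho$, each term is $\le 0$, so each term is $0$ and $s_\rho\in\{a,b\}$ for all $\rho$; in particular $m=s_\pi\in\{a,b\}$. Together with the $N\in\pi$ and $x=0$ cases, this shows $m\in\{|\Q(2n-2,q)|,|p\Q^{+}(2n-3,q)|,|p\Q^{-}(2n-3,q)|\}$.

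The main obstacle is precisely the jump from ``$x\ge\tfrac12 q$'' to ``$x\in\{\tfrac12 q,q\}$'': Condition~(II) is one-sided, and since a parabolic quasi-quadric need not be a quadric there is nothing purely local about $\pi$ forcing the sharper dichotomy. The device above circumvents this by treating all hyperplanes of a single $H\in\H^{\pm}$ at once and combining the one-sided bound with the averaging identity $\sum_\rho(s_\rho-a)(s_\rho-b)=0$, which holds because $\Q^{\pm}(2n-1,q)$ has exactly two hyperplane–intersection numbers. Along the way I would want to check the two cardinality identities used to simplify the counts, that the flag-counting identities really make $\sum_\rho(s_\rho-a)(s_\rho-b)$ independent of the chosen point set of that size, and that $q$ even is genuinely needed for $\Q^{\pm}(2n-1,q)$ to have only two hyperplane sections.
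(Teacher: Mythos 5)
Your proof is correct and essentially the paper's own argument: the same pencil count through $\pi$ combined with Condition (II) gives the one-sided bounds, and the same second-moment count over the codimension-$2$ subspaces of a fixed hyperplane of $\H^\pm$ forces $\sum_\rho(s_\rho-a)(s_\rho-b)=0$, hence equality term by term (the paper gets this vanishing by direct computation of the two flag counts, you by comparison with a genuine quadric of the same size -- a purely cosmetic difference). One small aside is inaccurate but harmless: a hyperplane section of $\Q^\pm(2n-1,q)$ is a $\Q(2n-2,q)$ or a cone $p\Q^\pm(2n-3,q)$ for every $q$, not only for $q$ even, so no parity assumption is hiding in that step.
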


\begin{proof} By Proposition \ref{pr:main}, the black points $\mathcal{B}$ form a parabolic quasi-quadric with nucleus the red point. It follows that a hyperplane in $\H^\pm$ meets $\mathcal{B}$ in $|\Q^\pm(2n-1,q)|$ points, a hyperplane in $\T$ meets $\mathcal{B}$ in $|p\Q(2n-2,q)|$ points, and a hyperplane in $\M^\pm$ meets $\mathcal{B}$ in $|\Q^\mp(2n-1,q)|$ points.

Let $\pi$ be a codimension $2$ space, and denote the number of black points in $\pi$ by $x^\pm$.  Let $t$ be the number of hyperplanes of $\T $ containing $\pi$ and let $s^{\pm}$ be the number of hyperplanes of $\H^\pm$  containing $\pi$, so there are $q+1-s^\pm-t$ hyperplanes of $\M^\pm$  containing $\pi$. 
Expressing that the total number of black points in $\PG(2n,q)$ can be obtained by grouping them in hyperplanes  containing $\pi$ yields $$(q+1-s^\pm-t)(|\Q^\mp(2n-1,q)|-x^\pm)+s^\pm(|\Q^\pm(2n-1,q)|-x^\pm)+t(|p\Q(2n-2,q)|-x^\pm) + x^\pm = |\Q(2n,q)|. $$

 If $\pi$ contains the red point, then $t=q+1$ and $s^\pm=0$, hence $x^\pm=|\Q(2n-2,q)|$ and we are done. So assume that $\pi$ does not contain the red point. Since the red point is contained in precisely one hyperplane through $\pi$, we have that $t=1$, and it follows that
\begin{align}
 x^\pm=2s^\pm(\pm q^{n-2})+\frac{q^{2n-2}\mp q^n\pm q^{n-1}-1}{q-1} \label{x-s}.
\end{align}
 
Now if $s^\pm = 0$, then $x^\pm =|p\Q^{\mp}(2n-3,q)|$, so we may assume that $s^\pm> 0$. Using (II) this implies that $\frac q2\leq s^{\pm}$. Moreover, since $t=1$, $s^\pm\leq q$. Plugging in $\frac q2\leq s^\pm \leq q$ in \eqref{x-s} yields that $|\Q(2n-2,q)|\leq x^+\leq |p\Q^{+}(2n-3,q)|$ and $|p\Q^-(2n-3,q)|\leq x^-\leq |\Q(2n-2,q)|$.
 We now show that $x^\pm=|\Q(2n-2,q)|$ or $x^\pm=|p\Q^{\pm}(2n-3,q)|$. 

Consider a fixed hyperplane $\Sigma\in\H^\pm$.
We will first count pairs $(P,\pi)$ where $P$ is a black point in a codimension $2$ space $\pi$ belonging to $\Sigma$ and then triples $(P,P',\pi)$ with $P,P'$ black points, $\pi\in\Sigma$ where $P\in \pi$, $P'\in \pi$, $P\neq P'$. If $\pi_i$ is a codimension $2$ space belonging to $\Sigma$, let $x_i^\pm$ denote the number of black points in $\pi_i$.
We have
$$ \sum_i x_i^\pm  = |\Q^\pm(2n-1,q)|\frac{q^{2n-2}-1}{q-1}$$
$$\sum_i x_i^\pm(x_i^\pm-1) = |\Q^\pm(2n-1,q)|(|\Q^\pm(2n-1,q)|-1)\frac{q^{2n-3}-1}{q-1}$$

So we obtain that
$$ \sum_i (x_i^\pm-|\Q(2n-2,q)|)(x_i^\pm-|p\Q^{\pm}(2n-3,q)|) =  0.$$
As
$|\Q(2n-2,q)|\leq x_i^+\leq |p\Q^{+}(2n-3,q)|$ and $|p\Q^-(2n-3,q)|\leq x_i^-\leq |\Q(2n-2,q)|$, we 
have $x_i^\pm=|\Q(2n-2,q)|$ or $|p\Q^{\pm}(2n-3,q)|$ as required. 
\end{proof}

 {\bfseries Proof of Theorem~\ref{th:main}}
 Let $\mathcal B$ be the set of black points. Since $\H^\pm$ satisfies Condition (I), by Proposition \ref{pr:main}, $\mathcal B$ is a quasi-quadric or $n=2$ and $\H^-$ is the set of solids disjoint from a hyperoval $\O$.
In the latter case, Lemma \ref{axiom-verification} shows that $\H^-$ satisfies Conditions (I) and (II) so we are done. In the former case, Lemma \ref{plane-count-new} shows that we can invoke Theorem \ref{SDW-JS} to conclude that
 $\mathcal B$ is the set of points of a non-singular quadric of $\PG(2n,q)$. Furthermore, by Proposition \ref{pr:main} $\H^\pm$ is the set of hyperplanes of $\PG(2n,q)$ that meet a non-singular quadric in a $\Q^\pm(2n-1,q)$ as required. We note that it follows that $\M^\pm$ is the set of solids that meet the non-singular quadric in a $\Q^\mp(2n-1,q)$, and $\T$ is the set of solids that meet the non-singular quadric in a quadratic cone. \hfill $\square$

\section{Conclusion}

Many variations on this theme are possible: one can consider subspaces of different dimensions for different types of quadrics. Characterisations capturing a broad class of geometries and/or with 
unexpected geometries in the conclusion are of particular interest.

A concrete open problem is to extend the characterisation of Theorem \ref{th:main} to odd characteristic. In this case, four intersection numbers would appear in Condition (II)  as can be seen from the following lemma.
\begin{lemma}\label{four-options}Let $q$ be odd and let $\H^\pm$ be the set of hyperplanes meeting $\Q=\Q(2n,q)$ in a $\Q^\pm(2n-1,q)$. Then every codimension $2$ space is contained in $0, \frac{q-1}{2}, \frac{q+1}{2}$ or $q$ elements of $\H^{\pm}$.
\end{lemma}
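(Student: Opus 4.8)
The plan is to pass to the polarity $\perp$ of $\PG(2n,q)$ attached to the non-singular quadratic form $Q$ defining $\Q$ (we use here that $q$ is odd). Since $\pi$ has codimension $2$, the polar $\ell:=\pi^{\perp}$ is a line, and the $q+1$ hyperplanes through $\pi$ are exactly the polar hyperplanes $R^{\perp}$ with $R\in\ell$. I would rely on two standard facts. First, $R^{\perp}$ is tangent to $\Q$ (so $R^{\perp}\cap\Q$ is a quadratic cone) exactly when $R\in\Q$, and otherwise $R^{\perp}\cap\Q$ is a non-singular $\Q^{+}(2n-1,q)$ or $\Q^{-}(2n-1,q)$ whose type switches with the square class of $Q(R)$ -- a discriminant computation, since $\mathrm{disc}(Q|_{R^{\perp}})$ lies in the square class of $\mathrm{disc}(Q)\cdot Q(R)$. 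Second, a tangent hyperplane section of $\Q^{\delta}(2n-1,q)$ at a point $P$ is a cone with vertex $P$ over a $\Q^{\delta}(2n-3,q)$ of the \emph{same} sign $\delta$. So everything reduces to counting the points of $\ell$ in a prescribed square-class locus.

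Next I would split according to $\mathrm{rad}(Q|_{\pi})=\pi\cap\pi^{\perp}=\pi\cap\ell$, of projective dimension $-1$, $0$ or $1$. If $\ell\subseteq\pi$, then $\ell$ is the radical of $Q|_{\pi}$, hence totally singular, so $\ell\subseteq\Q$, every hyperplane $R^{\perp}$ ($R\in\ell$) is tangent, and $\pi$ lies in $0$ members of $\H^{\pm}$. If $\ell\cap\pi=\{P\}$, then $P\in\Q$ and $\mathrm{rad}(Q|_{\ell})=\ell\cap\ell^{\perp}=\ell\cap\pi=\{P\}$, so $\ell$ meets $\Q$ only in $P$; moreover $\pi\cap\Q$ is a cone $P\,\Q^{\varepsilon}(2n-3,q)$ for some sign $\varepsilon$. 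For each of the $q$ points $R\in\ell\setminus\{P\}$ the hyperplane $R^{\perp}$ is non-singular and contains $\pi$, which is a hyperplane of $R^{\perp}$ meeting $R^{\perp}\cap\Q$ in the same cone; that is, $\pi$ is a tangent hyperplane of $R^{\perp}\cap\Q$ at $P$, so by the second fact $R^{\perp}\cap\Q$ has sign $\varepsilon$. Hence all $q$ of these hyperplanes have the same type, and $\pi$ lies in $q$ or $0$ members of $\H^{\pm}$ (the remaining hyperplane through $\pi$ being the tangent $P^{\perp}$).

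Finally, if $\ell\cap\pi=\emptyset$, then $Q|_{\ell}$ is a non-degenerate binary form, so $\ell$ is a secant ($2$ points on $\Q$) or an external line ($0$ points on $\Q$), and at each $R\in\ell\setminus\Q$ the type of $R^{\perp}\cap\Q$ is read off from the quadratic character $\chi\!\left(Q(R)\right)$. An elementary count of the values of a non-degenerate binary form over $\PG(1,q)$ -- for example from $\sum_{(x,y)\neq(0,0)}\chi\!\left(Q|_{\ell}(x,y)\right)=0$, together with the observation that rescaling a point of $\ell$ multiplies $Q$ by a square -- shows that the two square classes occur equally often among the points of $\ell\setminus\Q$, namely $(q-1)/2$ times each when $\ell$ is secant and $(q+1)/2$ times each when $\ell$ is external. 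Thus $\pi$ lies in $(q-1)/2$ or $(q+1)/2$ members of $\H^{\pm}$, and the four cases together give exactly the list $0,\tfrac{q-1}{2},\tfrac{q+1}{2},q$.

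The part I expect to be the main obstacle is the middle case: correctly recognising $\pi$ as a tangent hyperplane of the residual quadric $R^{\perp}\cap\Q$ and using the sign rule for tangent hyperplane sections to conclude that all $q$ non-singular hyperplanes through $\pi$ share one and the same type. The two ``type'' facts and the binary-form value count are routine (cf.\ Hirschfeld--Thas); once they are in place, the bookkeeping ``$a+b+c=q+1$'' for the three kinds of hyperplane through $\pi$ makes the conclusion immediate.
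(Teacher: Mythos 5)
Your proof is correct, and its skeleton matches the paper's: both arguments hinge on the polar line $\ell=\pi^\perp$, and your trichotomy $\ell\subseteq\pi$, $\ell\cap\pi$ a point, $\ell\cap\pi=\emptyset$ corresponds exactly to the paper's case split on $\pi\cap\Q$ being a cone $L\Q(2n-4,q)$, a cone $P\Q^\pm(2n-3,q)$, or a non-singular $\Q(2n-2,q)$, with $|\ell\cap\Q|\in\{2,0\}$ separating the values $\frac{q-1}{2}$ and $\frac{q+1}{2}$ in the last case. Where you genuinely diverge is in the mechanism that pins down the counts. The paper, in the $\Q(2n-2,q)$ case, simply counts the points of $\Q$ grouped by the $q+1$ hyperplanes through $\pi$ (using the known sizes $|\Q^\pm(2n-1,q)|$, $|\Q(2n-2,q)|$, $|P\Q(2n-2,q)|$) and solves a linear equation for $s$; you instead read off the type of each non-tangent $R^\perp$, $R\in\ell$, from the square class of the value $Q(R)$ via the discriminant relation, and then count the two square classes on $\ell$ by the value distribution of a binary form (a short character-sum/norm-form computation). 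Your treatment of the middle case is also more explicit than the paper's: where the paper states without argument that a point-cone $P\Q^\pm(2n-3,q)$ lies on $q$ hyperplanes of $\H^\pm$ (and, implicitly, $0$ of the other family), you justify it by observing that $\pi=P^\perp\cap R^\perp$ is the tangent hyperplane of $R^\perp\cap\Q$ at $P$ and invoking the fact that tangent sections preserve the sign $\delta$ -- the step you flagged as the main obstacle is indeed correct and is exactly what makes that assertion rigorous. The trade-off: the paper's counting is more elementary and self-contained (only quadric sizes are needed), while your route needs standard quadratic-form facts (discriminants, tangent-section signs, binary value counts) but gives finer information, namely which hyperplanes through $\pi$ are hyperbolic and which are elliptic, not just how many.
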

\begin{proof}
A codimension $2$ space $\pi$ intersects $\Q=\Q(2n,q)$ either in a cone $L\Q(2n-4,q)$, where $L$ is a line, a cone $P\Q^\pm(2n-1,q)$ where $P$ is a point, or a non-singular parabolic quadric $\Q(2n-2,q)$. A cone $L\Q(2n-4,q)$ lies on $0$ elements of $\H^\pm$ as it only lies on singular hyperplanes. A cone $P\Q^\pm(2n-1,q)$  clearly lies on $1$ tangent hyperplane, and $q$ elements of $\H^\pm$. 
Now assume that $\pi$ intersects $\Q$ in a $\Q(2n-2,q)$. The line $\pi^\perp$, where $\perp$ denotes the polarity associated with $\Q$ intersects $\Q$ in $0$ or $2$ points. If $\pi^\perp$ meets $\Q$ in $0$ points, none of the $q+1$ hyperplanes through $\pi$ is singular. If $s$ is the number of elements of $\H^\pm$ through $\pi$, then 
$$s(|\Q^\pm(2n-1,q)|-|\Q(2n-2,q)|)+(q+1-s)(|\Q^\mp(2n-1,q)|-|\Q(2n-2,q)|)=|\Q(2n,q)|-|\Q(2n-2,q)|,$$ which yields $s=\frac{q+1}{2}$. Similarly, if $\pi^\perp$ meets $\Q$ in $2$ points, then there are precisely $2$ singular hyperplanes through $\pi$ and we have that 
\[s(|\Q^\pm(2n-1,q)|-|\Q(2n-2,q)|)+(q-1-s)(|\Q^\mp(2n-1,q)|-|\Q(2n-2,q)|)=\]\[|\Q(2n,q)|-|\Q(2n-2,q)|-2(|P\Q(2n-2,q)|-|\Q(2n-2,q)|)\] which implies that $s=\frac{q-1}{2}$.
The lemma follows.
\end{proof}

Whilst we have not pursued this in great depth, there seems to be a tight balance between having enough information to start deriving results caused by the fact there are four possibilities in Lemma \ref{four-options} and adding in a hypothesis which renders the problem trivial. Even the analogon of Proposition \ref{pr:main} seems harder to prove when $q$ is odd as the elementary number theory techniques we used for $q$ even don't work when $q$ is odd.

{\bf Address of the authors:}\\

Jeroen Schillewaert\\
Department of Mathematics\\
University of Auckland\\
38 Princes Street, Auckland Central, Auckland 1010\\
New Zealand

Geertrui Van de Voorde\\
School of Mathematics and Statistics\\
University of Canterbury \\
Private bag 4800\\
8140 Christchurch\\
New Zealand
\end{document}